\newtheorem{Main}{Theorem}
\newtheorem{Coro}{Corollary}
\newtheorem{theorem}{Theorem}[section]
\newtheorem{lemma}[theorem]{Lemma}
\newtheorem{proposition}[theorem]{Proposition}
\theoremstyle{definition}
\theoremstyle{remark}
\numberwithin{equation}{section}
\begin{document}
\bibliographystyle{plain}

\title[]{Reducibility of ultra-differentiable quasiperiodic cocycles under an adapted arithmetic condition}

\author{Abed Bounemoura}
\address{CNRS - PSL Research University\\
    (Universit{\'e} Paris-Dauphine and Observatoire de Paris)}
\email{abedbou@gmail.com}
\author{Claire Chavaudret}
\address{Institut de Math\'ematiques de Jussieu, Universit\'e Paris Diderot}
\email{chavaudr@math.univ-paris-diderot.fr}

\author{Shuqing Liang}
\address{School of Mathematics, Jilin University, 130012 Changchun, P. R. China;}
\address{CNRS - PSL Research University\\     (CEREMADE, Universit{\'e} Paris-Dauphine)}
\email{liangshuqing@jlu.edu.cn; liang@ceremade.dauphine.fr}

\thanks{The first two authors were supported by ANR BeKAM; second author is grateful to G.Popov (Univ. Nantes) for useful discussions. S. Liang  gratefully acknowledges financial support from China Scholarship Council, and is supported by National Natural Science Foundation of China Grant No.11501240 and No.11671071.}

\subjclass[2010]{Primary  34C20 35Q41 37J40 37C55 }



\keywords{KAM theory, reducibility of quasiperiodic cocycle, ultra-differentiable functions}

\begin{abstract}
We prove a reducibility result for $sl(2,\mathbb{R})$ quasi-periodic cocycles close to a constant elliptic matrix in ultra-differentiable classes, under an adapted arithmetic condition which extends the Brjuno-R\"{u}ssmann condition in the analytic case. The proof is based on an elementary property of the fibered rotation number and deals with ultra-differentiable functions with a weighted Fourier norm. We also show that a weaker arithmetic condition is necessary for reducibility, and that it can be compared to a sufficient arithmetic condition.
\end{abstract}

\maketitle

\section{Introduction}


We will study the following time-quasiperiodic linear system, or quasiperiodic cocycle
\begin{eqnarray*}\label{eq:first}
\left\{
\begin{array}{ll}
 x'(t) = (A + F(\theta(t)))x(t),\\
\theta'(t)=\omega,
 \end{array}
\right.
\end{eqnarray*}
where $x\in\mathbb{R}^2$, $\theta\in\mathbb{T}^d=\mathbb{R}^d/ \mathbb{Z}^d$ with an integer $d \geq 1$, $\omega\in\mathbb{R}^d$ is a non-resonant frequency vector, $A$ is elliptic (meaning it is conjugated to some non-zero element in $so(2,\mathbb{R})$) and $F : \mathbb{T}^d \rightarrow sl(2,\mathbb{R})$ belongs to some ultra-differentiable class. Let us recall that $sl(2,\mathbb{R})$ is the Lie algebra of traceless matrices and $so(2,\mathbb{R})$ is the Lie sub-algebra of skew-symmetric matrices. Such a quasi-periodic cocycle is said to be reducible if the time-quasiperiodic linear system can be conjugated, by a time quasi-periodic transformation, to a constant (time-independent) linear system.  

One of the main motivation for studying reducibility of quasi-periodic cocycles came from the Schr\"{o}dinger equation
\[
-y''(t)+q(\theta+\omega t) y(t) = E y(t)
\]
and the question of existence of so-called Floquet solutions (which always exist when $d=1$); this is readily seen to be equivalent to the reducibility of a family of quasi-periodic cocycle depending on the "energy" parameter $E$. In a pioneering work, Dinaburg and Sinai \cite{dinaburg1975one} proved that for a small analytic potential $q$ (or large energy $E$), for a set of positive (and asymptotically full) measure of energy $E$ in the spectrum, the associated cocycle is reducible provided the frequency $\omega\in\mathbb{R}^d$ is Diophantine:
\[
|k\cdot \omega|\geq\frac{\gamma}{|k|^\tau},\quad k\in \mathbb{Z}^d\backslash\{0\},
\]
for some constant $ \gamma>0$ and $\tau\geq d-1$, where $|k|$ is the sum of the absolute values of the components and $k\cdot \omega$ the Euclidean inner product. For a fixed cocycle as we considered above, their result amounts to a reducibility result under a Diophantine condition (with respect to $\omega$) on the so-called fibered rotation number $\rho=\rho(A+F)$, for a small analytic $F$, the smallness assumption depending on $\rho$. R\"{u}ssmann \cite{russmann1980one} later extended this result, under a more general arithmetic condition on $\omega$ and $\rho$ (this condition, weaker than the Diophantine condition, is slightly stronger than the so-called the Brjuno-R\"{u}ssmann condition). Moser and P\"{o}schel \cite{moser1984extension} further extended the result to include some rational fibered rotation numbers, using a technique of resonance-cancellation, but the breakthrough came from Eliasson~ \cite{eliasson1992floquet}: by sharpening this resonance-cancellation technique he obtained the reducibility for a set of full measure of fibered rotation number.  

Since then, many works have been devoted to the reducibililty of quasi-periodic cocycles, in different regularity classes and for cocycles taking values in different Lie algebras. In particular, many non-perturbative results have been obtained but they are restricted to two-dimensional frequencies $\omega \in \mathbb{R}^2$; we should not try to review to state of the art as we will be interested only in perturbative results, but valid in any dimension $\omega \in \mathbb{R}^d$. More precisely, we will be interested in the interaction between the regularity of the cocycle and the arithmetic properties of the frequency vector $\omega$.

For smooth cocycles, the Diophantine condition on $\omega$ is known to be sufficient, and it is not hard to see it is also necessary. The analytic case is more subtle. Chavaudret and Marmi \cite{chavaudret2012reducibility} extended the result of R\"{u}ssmann to obtain reducibility under the Brjuno-R\"{u}ssmann condition; this sufficient arithmetic condition is not know to be optimal, but can be compared to a natural necessary condition (that we call the R\"{u}ssmann condition). The proof in~\cite{chavaudret2012reducibility}  uses ideas of R\"{u}ssmann \cite{russmann2010kam} and P\"{o}schel \cite{poschel2011kam}, which deal with the corresponding results for respectively Hamiltonian systems and vector fields on the torus. Those results were later extended by Bounemoura and F\'{e}joz \cite{bounebouraACMSP}, \cite{BounemouraMAMS} for a more general class of systems with ultra-differentiable regularity. A particular case is the $\alpha$-Gevrey regularity, for a real parameter $\alpha \geq 1$, for which the analytic case is recovered by setting $\alpha=1$; an $\alpha$-Brjuno-R\"{u}ssmann condition is introduced in~\cite{bounebouraACMSP}  (an equivalent condition was independently obtained in~\cite{portugais} for vector fields on the torus) and exactly as for $\alpha=1$, this sufficient condition is shown to be comparable to the natural necessary condition. Unfortunately the results in~\cite{BounemouraMAMS} do not allow such a comparison in general as the sufficient arithmetic condition is affected by a technical assumption, and so the results are not as accurate as those obtained in the Gevrey case in~\cite{bounebouraACMSP}. 

The purpose of this article is to improve the results of \cite{BounemouraMAMS} within the context of quasi-periodic cocycles: we will obtain a result of reducibility valid for a larger class of ultra-differentiable systems, with a better sufficient arithmetic condition in the sense that it can be compared to the natural necessary condition.

\section{Statement of the main results}

Let us recall the setting. We have $\mathbb{T}^d=\mathbb{R}^d/\mathbb{Z}^d$ for an integer $d\geq1$ and we consider the cocycle
\begin{eqnarray}\label{eq:aim}
\left\{
\begin{array}{ll}
 x'(t) = (A + F(\theta(t)))x(t),\\
\theta'(t)=\omega,
 \end{array}
\right.
\end{eqnarray}
where $x\in\mathbb{R}^2$, $\theta\in\mathbb{T}^d$ and $A\in sl(2,\mathbb{R})$ is an elliptic matrix, or equivalently, fixing $\theta(0)=0 \in\mathbb{T}^d$, we consider
\[  x'(t) = (A + F(t\omega))x(t).\] 
Such a cocycle will be simply denoted by $(\omega,A+F)$. It is said to be reducible if there exists a (fibered) conjugacy between $(A+F)$ and a constant cocycle: there exist $Y : \mathbb{T}^d \rightarrow GL(2,\mathbb{R})$ and an elliptic matrix $B \in sl(2,\mathbb{R})$ such that
\[
\partial_\omega Y = (A+F)Y-YB,
\]
where $\partial_\omega Y(\theta)=\partial_\theta Y(\theta)\cdot \omega$. Since the matrices actually take values in $sl(2,\mathbb{R})$, a perhaps more natural definition would be to require that the conjugacy $Y$ takes value in the corresponding Lie group $SL(2,\mathbb{R})$: it follows from the work in~\cite{chavaudret2011reducibility} that these two definitions are the same. In order to get such a reducibility, one need to impose regularity assumptions on $F$ and an arithmetic condition on $\omega$ (we will also impose a similar arithmetic condition on the fibered rotation number).
 
To quantify the regularity of $F : \mathbb{T}^d \rightarrow sl(2,\mathbb{R})$, we introduce a weight function
\[ \Lambda : [1,+\infty)\to[1,+\infty) \]
which we assume is increasing and differentiable. Expanding a smooth function $f\in C^{\infty}(\mathbb{T}^d,\mathbb{R})$ in Fourier series
\[ f(\theta) = \sum_k \hat{f}(k)e^{2\pi i k\cdot\theta}\]
we will say it is $\Lambda$-ultra-differentiable if there exists $r>0$ such that
\begin{equation}\label{eq:norm of ultradifferentiable}
|f|_{r}=|f|_{\Lambda,r}:=\sum_{k\in\mathbb{Z}^d} |\hat{f}(k)|e^{2\pi \Lambda(|k|)r}<\infty.
\end{equation}
The ultra-differentiable weighted norm of $f$ is then defined by \eqref{eq:norm of ultradifferentiable}, and we say $f$ belongs to the ultra-differentiable function class $U_{\Lambda,r}(\mathbb{T}^d,\mathbb{R})$. This defines a Banach space. We will require the function $\Lambda$ to be subadditive, namely
\begin{equation}\label{S}
\Lambda(x+y)\leq \Lambda(x)+\Lambda(y), \quad x,y\geq 1.\tag{S}
\end{equation}
This assumption turns $U_{\Lambda,r}(\mathbb{T}^d,\mathbb{R})$ into a Banach algebra (for a proof of this elementary fact, see Appendix~\ref{app1}). Now for a matrix-valued function $M:\mathbb{T}^d\to M_2(\mathbb{R})$, one extends the definition of~\eqref{eq:norm of ultradifferentiable} in such a way that $U_{\Lambda,r}(\mathbb{T}^d,M_2(\mathbb{R}))$ becomes a Banach algebra for the product of matrices. In the sequel, we will use the notation $U_\Lambda=\bigcup_{r >0} U_{\Lambda,r}$ when convenient. Main examples of ultra-differentiable classes are the $\alpha$-Gevrey class associated to $\Lambda_\alpha(v) = v^{\frac{1}{\alpha}}$ for $\alpha\geq1$ and the real analytic class for $\Lambda_1(v) = v$, but many more examples are readily available. In particular, the quasi-analytic class, i.e the class of functions which are uniquely determined by the sequence of their derivatives at a point, corresponds to a function $\Lambda$ satisfying
\begin{equation}\label{quasi-an}
\int_1^\infty \frac{\Lambda(v)}{v^2} dv =+\infty
\end{equation}
The parameter $r$ can be called "ultra-differentiable parameter"  if \eqref{eq:norm of ultradifferentiable} holds, and it is essentially the radius of convergence for real-analytic functions.

Next we need to quantify the non-resonance condition on  $\omega\in\mathbb{R}^d$. To do so, we introduce an approximating function
\[
 \Psi: [1,+\infty)\to[1,+\infty) 
 \]
which we assume, without loss of generality, to be increasing and differentiable, and for which
\begin{equation}\label{eq:BR frequency omega}
\Psi(K)=\max\{|2\pi k\cdot \omega|^{-1} \; | \; 0 < |k| \leq K\}, \quad K \in \mathbb{N}.
 \end{equation}
We also need to quantify the non-resonance condition on the fibered rotation number $\rho= \rho(A + F)$, a definition of which is recalled in Appendix~\ref{app2}. Without loss of generality, we use the same approximating function and requires that
\begin{equation}\label{eq:BR frequency rho}
|2\rho\pm2\pi  k\cdot \omega | \geq \frac{1}{\Psi(K)},\quad 0 < |k| \leq K.
\end{equation}
The approximating function $\Psi$ will be assumed to satisfy the following arithmetic condition adapted to the weight $\Lambda$, that we call the $\Lambda$-Brjuno-R\"{u}ssmann condition
\begin{equation}\label{Assumption H3-1:convergenceofRadius}
\int_1^{+\infty} \frac{\Lambda'(v)\ln\Psi(v)dv}{\Lambda^2(v)} <\infty.
 \tag{$\Lambda$-BR}
\end{equation} 
One easily check that the last condition is equivalent to
\begin{equation}\label{BR}
\int_1^{+\infty} \frac{\Psi'(v)dv}{\Psi(v)\Lambda(v)} <\infty.
\end{equation} 
In the Gevrey case $\Lambda_\alpha(v) = v^{\frac{1}{\alpha}}$ (and thus in the analytic case when $\alpha=1$), the $\Lambda_\alpha$-Brjuno-R\"{u}ssmann condition is
\begin{equation}\label{Gev}
\int_1^{+\infty} \frac{\ln\Psi(v)dv}{v^{1+\frac{1}{\alpha}}}
\end{equation} 
and one recovers the $\alpha$-Brjuno-R\"{u}ssmann condition introduced in~\cite{bounebouraACMSP} (for $\alpha=1$, this is the Brjuno-R\" ussmann condition as in \cite{chavaudret2012reducibility}). 

\bigskip
\noindent Now we can state the main theorem of this paper.

\begin{Main}\label{th: main theorem}
Assume that $\Lambda$ satisfy~\eqref{S} and $\omega$ and $\rho$ verify~\eqref{eq:BR frequency omega} and~\eqref{eq:BR frequency rho} with $\Psi$ satisfying the $\Lambda$-Brjuno-R\"{u}ssmann condition~\eqref{Assumption H3-1:convergenceofRadius}. Given any $r>0$ and any quasiperiodic cocycle $(\omega,A+F)$ as in~\eqref{eq:aim}, with a non-zero elliptic matrix $A\in sl(2,\mathbb{R})$ and $F\in U_{\Lambda,r}(\mathbb{T}^d,sl(2,\mathbb{R}))$, there exists $\bar{\varepsilon}$ depending only on $r,\Lambda,A,\omega$ such that if $|F|_r\leq \bar{\varepsilon}$,  the cocycle $(\omega,A+F)$ is reducible with a conjugacy $Y \in U_{\Lambda,{r}/{2}}(\mathbb{T}^d, GL(2,\mathbb{R}))$ which satisfies $|Y|_{\frac{r}{2}}\leq 2$ and $|Y^{-1}|_{\frac{r}{2}}\leq 2$.
\end{Main}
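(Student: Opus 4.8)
The plan is to run a KAM iteration in the scale of Banach algebras $U_{\Lambda,r_j}(\mathbb{T}^d,sl(2,\mathbb{R}))$, $r_j\downarrow r/2$, each step solving a single homological equation; the structural point that makes the scheme work with the stated (and only the stated) arithmetic conditions is that every conjugacy produced is homotopic to a constant, so that the fibered rotation number is an invariant of the iteration and one never needs arithmetic conditions on the eigenvalues of the intermediate constant matrices. To begin, since $A$ is non-zero elliptic, after conjugating the cocycle by a fixed matrix (which changes $|\cdot|_{\Lambda,r}$ only by a fixed factor) we may assume $A=\alpha J$ with $\alpha\neq0$ and $J=\left(\begin{smallmatrix}0&-1\\1&0\end{smallmatrix}\right)$, so that $\mathrm{ad}_A$ has eigenvalues $0,\pm2i\alpha$ on $sl(2,\mathbb{C})$.

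\textit{One step.} Given $(\omega,A_j+F_j)$ with $A_j$ elliptic of eigenvalues $\pm i\alpha_j$ ($\alpha_j$ close to $\alpha$) and $\varepsilon_j:=|F_j|_{r_j}$ small, fix a truncation order $N_j$, write $F_j=T_{N_j}F_j+R_{N_j}F_j$ (Fourier modes $|k|\le N_j$ resp.\ $|k|>N_j$) and solve in $U_{\Lambda,r_{j+1}}$ the homological equation $(\partial_\omega-\mathrm{ad}_{A_j})Z_j=T_{N_j}F_j-\widehat{F_j}(0)$ mode by mode: $\widehat{Z_j}(k)=(2\pi i(k\cdot\omega)\,\mathrm{Id}-\mathrm{ad}_{A_j})^{-1}\widehat{F_j}(k)$ for $0<|k|\le N_j$. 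The operator here has eigenvalues of moduli $|2\pi k\cdot\omega|$ and $|2\alpha_j\pm2\pi k\cdot\omega|$, bounded below by $\Psi(|k|)^{-1}$ and $\tfrac12\Psi(|k|)^{-1}$ thanks to \eqref{eq:BR frequency omega}, \eqref{eq:BR frequency rho} and the bound $|\alpha_j-\rho|\le\tfrac14\Psi(N_j)^{-1}$ carried along the induction, so its inverse has norm $\le C_A\Psi(|k|)$; hence, with $\delta_j:=r_j-r_{j+1}$ and $\mathcal R(\delta):=\sup_{v\ge1}\Psi(v)e^{-2\pi\Lambda(v)\delta}$,
\[
|Z_j|_{r_{j+1}}\le C_A\,\mathcal R(\delta_j)\,\varepsilon_j.
\]
Setting $Y_j:=e^{Z_j}\in U_{\Lambda,r_{j+1}}(\mathbb{T}^d,SL(2,\mathbb{R}))$ and $A_{j+1}:=A_j+\widehat{F_j}(0)\in sl(2,\mathbb{R})$, the conjugate of $(\omega,A_j+F_j)$ by $Y_j$ is a cocycle $(\omega,A_{j+1}+F_{j+1})$ in which $F_{j+1}$ collects the high-mode tail $R_{N_j}F_j$ and terms at least quadratic in $(Z_j,F_j)$; using that $U_{\Lambda,r_{j+1}}$ is a Banach algebra — here subadditivity \eqref{S} is essential — one gets
\[
|F_{j+1}|_{r_{j+1}}\le C\bigl(\mathcal R(\delta_j)\,\varepsilon_j^{2}+e^{-2\pi\Lambda(N_j)\delta_j}\,\varepsilon_j\bigr).
\]

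\textit{Propagation of the rotation number.} Since $Y_j=e^{Z_j}$ is homotopic to a constant in $GL(2,\mathbb{R})$, the fibered rotation number is unchanged: $\rho(A_{j+1}+F_{j+1})=\rho(A_j+F_j)=\cdots=\rho$. As $\rho(A_{j+1})=\alpha_{j+1}$ and the fibered rotation number depends continuously (indeed Lipschitz-ly in the $C^0$, hence the $|\cdot|_{r_{j+1}}$, norm) on the cocycle, $|\alpha_{j+1}-\rho|\le C'|F_{j+1}|_{r_{j+1}}$. This both propagates the smallness $|\alpha_j-\rho|\lesssim\varepsilon_j$ used above — the bound $|2\alpha_j\pm2\pi k\cdot\omega|\ge\tfrac12\Psi(|k|)^{-1}$ for $|k|\le N_j$ survives as long as $\varepsilon_j\Psi(N_j)\to0$ — and forces $A_{j+1}$ to stay elliptic, a hyperbolic or parabolic constant matrix having vanishing rotation number while $\alpha_{j+1}\to\rho\approx\alpha\neq0$.

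\textit{Convergence and conclusion.} One chooses $\delta_j>0$ with $\sum_j\delta_j=r/2$ and $N_j$ with $\Lambda(N_j)\ge|\ln\varepsilon_j|/(2\pi\delta_j)$, so that the tail term is $\le\varepsilon_j^2$ and $\varepsilon_{j+1}\le2C\,\mathcal R(\delta_j)\,\varepsilon_j^2$. The heart of the proof is that, for a suitable choice of the $\delta_j$, this recursion yields $\varepsilon_j\to0$ super-exponentially together with $\varepsilon_j\Psi(N_j)\to0$ precisely under $\int_1^{\infty}\frac{\Psi'(v)\,dv}{\Psi(v)\Lambda(v)}<\infty$, i.e.\ under \eqref{BR}, equivalently \eqref{Assumption H3-1:convergenceofRadius}; this is the analogue, for the weight $\Lambda$ and the Fourier norm $|\cdot|_{\Lambda,r}$, of the R\"{u}ssmann--P\"{o}schel analysis of the Brjuno condition, and it is the main obstacle, since it is here that the arithmetic condition enters quantitatively and that the constants must be tracked cleanly enough to obtain a sufficient condition directly comparable to the necessary one (improving on \cite{BounemouraMAMS}, where an extra technical hypothesis was needed). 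Granting this, $|Z_j|_{r_{j+1}}\le C_A\mathcal R(\delta_j)\varepsilon_j$ is summable, so taking $\bar\varepsilon$ small enough — depending only on $r,\Lambda,A$ and, through $\Psi$, on $\omega$ — forces $\sum_j|Y_j-\mathrm{Id}|_{r_{j+1}}\le\ln2$. Then $Y:=\lim_{N\to\infty}Y_0Y_1\cdots Y_N$ exists in $U_{\Lambda,r/2}$ with $|Y|_{r/2}\le2$ and $|Y^{-1}|_{r/2}\le2$, the matrices $A_j$ converge to an elliptic $B\in sl(2,\mathbb{R})$, $F_j\to0$, and passing to the limit in the cumulative identities $\partial_\omega(Y_0\cdots Y_{j-1})=(A+F)(Y_0\cdots Y_{j-1})-(Y_0\cdots Y_{j-1})(A_j+F_j)$ gives $\partial_\omega Y=(A+F)Y-YB$, i.e.\ the desired reducibility.
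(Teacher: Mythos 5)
Your overall skeleton is the right one and matches the paper's strategy: a KAM iteration in the weighted-Fourier spaces $U_{\Lambda,r_j}$, each step solving a single cohomological equation with a truncation, and — crucially — exploiting that the conjugacies are homotopic to the identity so that the fibered rotation number $\rho$ is conserved, which is what lets one apply the arithmetic condition~\eqref{eq:BR frequency rho} at every step without any further hypothesis on the intermediate constants. There are minor route differences (the paper uses $Z_\nu=I+X_\nu$ rather than $e^{Z}$, and after adding $\hat F_\nu(0)$ it conjugates by a constant $P_\nu\in SL(2,\mathbb{R})$ to put $A_{\nu+1}$ back in normal form $\alpha_{\nu+1}J$, which keeps all constants uniformly bounded; it also uses a \emph{geometric} decay $\varepsilon_\nu=4^{-\nu}\varepsilon$ rather than the quadratic/Newton closure you propose). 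These are legitimate alternatives, not errors.

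The genuine gap is the one you yourself flag with \lq\lq Granting this\ldots\rq\rq. The entire quantitative content of the theorem — that the iteration closes \emph{precisely} under~\eqref{Assumption H3-1:convergenceofRadius} — is the choice of the sequences $(N_\nu)$, $(\sigma_\nu)$, $(\varepsilon_\nu)$ and the verification that $\sum\sigma_\nu\leq r/2$ while the error goes to zero, and you omit it entirely. In the paper this is done by a specific construction: take $N_\nu=\Psi^{-1}(2^\nu\Psi(N_0))$, so that $\Psi(N_\nu)\varepsilon_\nu=2^{-\nu}\Psi(N_0)\varepsilon$ is automatically summable; take $\sigma_\nu=\frac{3\ln 2}{\pi\Lambda(N_\nu)}$, so that $e^{-2\pi\Lambda(N_\nu)\sigma_\nu}=2^{-6}$, which is exactly what closes the step estimate $|F_{\nu+1}|_{r_{\nu+1}}\leq\varepsilon_\nu/4$ with geometric decay; and then the only place where the arithmetic condition enters is the integral comparison
\[
\sum_{\nu\geq 0}\sigma_\nu \;=\; \frac{3\ln 2}{\pi}\sum_{\nu\geq 0}\frac{1}{\Lambda(N_\nu)} \;\leq\; \frac{3}{\pi}\int_{N_0}^{\infty}\frac{\Psi'(v)\,dv}{\Lambda(v)\Psi(v)} \;=\; \frac{3}{\pi}\int_{N_0}^{\infty}\frac{\Lambda'(v)\ln\Psi(v)\,dv}{\Lambda^2(v)}\;\leq\;\frac r2,
\]
which holds for $N_0$ large thanks to~\eqref{BR}. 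Your scheme instead fixes $\sum\delta_j=r/2$ a priori and then chooses $N_j$ from $\varepsilon_j$, but you never show that the resulting $\mathcal R(\delta_j)=\sup_v\Psi(v)e^{-2\pi\Lambda(v)\delta_j}$ is compatible with quadratic convergence of $\varepsilon_j$ under the BR condition; that is a nontrivial claim (indeed, one of the advertised points of the paper is that the R\"ussmann-style geometric iteration gives a \emph{cleaner} arithmetic condition than the Newton iteration in~\cite{BounemouraMAMS}). As it stands, the proposal does not contain the argument that the hypothesis~\eqref{Assumption H3-1:convergenceofRadius} is actually used; you must supply the sequence construction and the integral estimate, or argue why your super-exponential closure works under the same condition.
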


The $\Lambda$-Brjuno-R\"{u}ssmann condition~\eqref{Assumption H3-1:convergenceofRadius} is thus sufficient for the reducibility within the class $U_\Lambda$; we do not know if the condition is necessary yet it implies the following $\Lambda$-R\"{u}ssmann condition
\begin{equation}\label{Rus}
\lim_{v\to\infty}\frac{\log\Psi(v)}{\Lambda(v)}=0\tag{$\Lambda$-R}
\end{equation}
which is necessary as the next statement shows. 

\begin{Main}\label{th2}
Assume that $\Lambda$ satisfy~\eqref{S}, $\omega$ verify~\eqref{eq:BR frequency omega} with $\Psi$ not satisfying the $\Lambda$-R\"{u}ssmann condition~\eqref{Rus} and $\rho$ is arbitrary. Then there exists $r>0$ such that for all $\varepsilon>0$, there exist a quasiperiodic cocycle $(\omega,A+F)$ as in~\eqref{eq:aim}, with a non-zero elliptic matrix $A\in sl(2,\mathbb{R})$ and $F\in U_{\Lambda,r}(\mathbb{T}^d,sl(2,\mathbb{R}))$ satisfying $|F|_r \leq \varepsilon$ which is not reducible by any continuous conjugacy $Y : \mathbb{T}^d \rightarrow GL(2,\mathbb{R})$.

\end{Main}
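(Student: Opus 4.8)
The plan is to convert the failure of the $\Lambda$-R\"ussmann condition into a sequence of extremely resonant frequencies, to plant on them a perturbation whose Fourier amplitudes are calibrated against the associated small divisors, and then to show by a resonant analysis of the conjugacy equation that no continuous conjugacy can exist, because one of its Fourier coefficients would be forced to diverge. Concretely, since $\Psi$ violates~\eqref{Rus} we have $c:=\limsup_{v\to\infty}(\log\Psi(v))/\Lambda(v)>0$, so with $\delta:=c/2$ there are $k_j\in\mathbb{Z}^d\setminus\{0\}$ with $|k_j|\nearrow\infty$ and $|2\pi k_j\cdot\omega|\leq e^{-\delta\Lambda(|k_j|)}$; extracting a subsequence I may also assume the $k_j$ lacunary, $|k_{j+1}|\geq 2|k_j|$, and $\Lambda(|k_j|)\geq j^2$ (if $\Lambda$ is bounded one is in the classical Liouville situation, handled directly, so I assume $\Lambda(v)\to\infty$). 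Then I fix $r:=\delta/(4\pi)$ — this is where the freedom "there exists $r$" in the statement is used — put $J=\left(\begin{smallmatrix}0&-1\\1&0\end{smallmatrix}\right)$, $E=\left(\begin{smallmatrix}1&0\\0&-1\end{smallmatrix}\right)$, and set
\[
F(\theta):=\sum_{j\geq1}2\eta_j\cos(2\pi k_j\cdot\theta)\,E,\qquad \eta_j:=c_0\varepsilon\,2^{-j}e^{-2\pi r\Lambda(|k_j|)},
\]
with $c_0$ a small absolute constant. By subadditivity~\eqref{S}, $|F|_{\Lambda,r}\leq\sum_j2\eta_je^{2\pi r\Lambda(|k_j|)}\leq\varepsilon$, while the calibration forces $\eta_j/|2\pi k_j\cdot\omega|\geq c_0\varepsilon\,2^{-j}e^{(\delta/2)\Lambda(|k_j|)}\to+\infty$ since $\Lambda(|k_j|)\geq j^2$. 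Conjugating by a constant I may take $A=2\pi\alpha J$, and I choose $\alpha\neq0$ so that $\rho(\omega,A+F)\notin\pi(\mathbb{Z}^d\cdot\omega)$: this is possible because $\alpha\mapsto\rho(\omega,2\pi\alpha J+F)$ is continuous and strictly increasing (adding a positive constant rotation strictly increases the fibered rotation number), while $\pi(\mathbb{Z}^d\cdot\omega)$ is countable.

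Suppose now, towards a contradiction, that $(\omega,A+F)$ is reducible by some continuous $Y:\mathbb{T}^d\to GL(2,\mathbb{R})$ to a constant $B$. Since $\mathrm{tr}(Y^{-1}\partial_\omega Y)=\partial_\omega\log|\det Y|$ has zero mean on $\mathbb{T}^d$ while $\mathrm{tr}\,B$ is constant, both vanish, so $\det Y$ is $\omega$-invariant hence constant, and after rescaling (and, if $\det Y<0$, absorbing a fixed reflection, which only conjugates $B$) I may take $Y\in SL(2,\mathbb{R})$ — in accordance with the equivalence of the two notions of reducibility recalled in the introduction — with a well-defined degree $m\in H^1(\mathbb{T}^d;\mathbb{Z})=\mathbb{Z}^d$. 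If $B$ were hyperbolic, the cocycle would be uniformly hyperbolic; but uniform hyperbolicity is $C^0$-open and $(\omega,A)$, being elliptic, is not uniformly hyperbolic, hence neither is $(\omega,A+F)$ for $\varepsilon$ small. If $B$ were parabolic or zero, then $\rho(\omega,B)=0$ and the shift formula $\rho(\omega,A+F)=\rho(\omega,B)+\pi\langle m,\omega\rangle$ would put $\rho(\omega,A+F)$ in $\pi(\mathbb{Z}^d\cdot\omega)$, contrary to the choice of $\alpha$. So $B$ is elliptic; conjugating $Y$ by a constant, $B=2\pi bJ$, and the shift formula gives $\alpha-b=\langle m,\omega\rangle$.

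It then remains to contradict the existence of such a $Y$. Writing $\partial_\omega Y=(A+F)Y-YB$ mode by mode, $\bigl(2\pi i(k\cdot\omega)-\mathrm{ad}_{A,B}\bigr)\hat Y(k)=\sum_l\hat F(l)\hat Y(k-l)$, where $\mathrm{ad}_{A,B}(Z)=AZ-ZB$ is skew-adjoint for the Frobenius product, with eigenvalues $\pm2\pi i(\alpha-b)$ and $\pm2\pi i(\alpha+b)$. The point of the relation $\alpha-b=\langle m,\omega\rangle$ is that at the \emph{degree-shifted} mode $k=k_j+m$, one eigenvalue of $2\pi i((k_j+m)\cdot\omega)-\mathrm{ad}_{A,B}$ equals $2\pi i(k_j\cdot\omega)$, which is tiny. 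Projecting onto its (one-dimensional, $J$-commuting) eigenspace $\Pi$ — of norm $1$ by skew-adjointness — and isolating the term $l=k_j$ from $\mathrm{supp}\,F$, one gets
\[
\|\hat Y(k_j+m)\|\ \geq\ \|\Pi\hat Y(k_j+m)\|\ =\ \frac{\bigl\|\Pi\bigl(\eta_j\,E\,\hat Y(m)+R_j\bigr)\bigr\|}{2\pi|k_j\cdot\omega|},
\]
with $R_j:=\eta_jE\hat Y(2k_j+m)+\sum_{j'\neq j}\eta_{j'}E\bigl[\hat Y(k_j+m-k_{j'})+\hat Y(k_j+m+k_{j'})\bigr]$. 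If one shows the numerator is $\geq c_1\eta_j$ for large $j$, then $\|\hat Y(k_j+m)\|\geq c_1\eta_j/(2\pi|k_j\cdot\omega|)\to+\infty$, contradicting $\hat Y(k)\to0$ (Riemann--Lebesgue, $Y$ being continuous), which finishes the proof.

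The genuine obstacle is exactly that last lower bound — showing the principal resonant term $\eta_jE\hat Y(m)$ is neither killed by a degenerate alignment $\Pi(E\hat Y(m))=0$ nor overwhelmed by $R_j$. The term $\eta_jE\hat Y(2k_j+m)$ and the high-index part of $R_j$ are $o(\eta_j)$ thanks to lacunarity, $\Lambda(|k_j|)\geq j^2$, and $\hat Y(k)\to0$; but the low-index cross terms in $R_j$ are a priori only $O\bigl(\varepsilon\sup_{|k|\gtrsim|k_j|}\|\hat Y(k)\|\bigr)$, which need not be $o(\eta_j)$ for a merely continuous $Y$. I expect this to be handled by replacing the naive superposition above with an inductive approximation-by-conjugation construction in which the perturbation built by stage $j$ is supported on frequencies $\geq|k_j|$ (so there are no "past" cross terms when $k_j$ is inserted), carried along with a bootstrap on the high-frequency tail of $Y$; or, failing that, by running the divergence argument simultaneously over several auxiliary modes and several resonant blocks so that the degeneracy cannot hold for all of them at once, and then possibly adjusting $\alpha$ once more to avoid a slightly larger countable exceptional set. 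The remaining ingredients — the $U_{\Lambda,r}$-smallness of $F$, the reduction to $SL(2,\mathbb{R})$, and the hyperbolicity and rotation-number exclusions — are routine.
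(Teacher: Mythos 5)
You candidly flag the gap yourself, and it is indeed a genuine one: the entire contradiction rests on the lower bound $\bigl\|\Pi\bigl(\eta_j E\hat Y(m)+R_j\bigr)\bigr\|\geq c_1\eta_j$, and you give no argument for it. Two independent things can go wrong. First, the alignment $\Pi(E\hat Y(m))=0$ cannot be excluded by adjusting $\alpha$ alone, since $\hat Y(m)$ depends on the hypothetical $Y$ and not just on $A$, $B$. Second, and more seriously, the cross-terms $\sum_{j'\neq j}\eta_{j'}E\hat Y(k_j+m\pm k_{j'})$ live at frequencies comparable to $|k_j|$, so their size is governed by $\sup_{|k|\gtrsim |k_j|}\|\hat Y(k)\|$, about which Riemann--Lebesgue gives only $o(1)$ -- not $o(\eta_j)$, which is doubly exponentially small. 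Nothing in the hypotheses forbids these terms from cancelling the principal one. The two repairs you sketch (an inductive modification of $F$ to avoid past cross-terms, or running the argument over several modes at once) are not carried out and are not obviously within reach with merely continuous $Y$; the first in particular would force the perturbation at stage $j$ to depend on the nonexistent conjugacy $Y$, which is circular. The difficulty is structural: by choosing $F$ proportional to $E$ you made the cocycle genuinely nonabelian, so the conjugacy equation in Fourier becomes a full convolution, and the resonant mode you want to isolate is coupled to infinitely many others.

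The paper sidesteps all of this with a much more economical device: it takes $A=\rho J$ and $F=(u-\rho)J$, so the whole cocycle takes values in the abelian Lie algebra $so(2,\mathbb{R})$. By the result of Chavaudret (\emph{Ergodic Theory Dynam.\ Systems} 2011) quoted in the paper, a $GL(2,\mathbb{R})$ conjugacy then forces an $SO(2,\mathbb{R})$ conjugacy $Y=\exp(vJ)$, and the conjugacy equation collapses into the scalar cohomological equation $\omega\cdot\partial_\theta v=u-\rho$. One then puts $\hat u(k_j)=\varepsilon C^{-1}\,2\pi k_j\cdot\omega$ on the resonant modes; the putative solution has $\hat v(k_j)=\varepsilon C^{-1}$, a nonvanishing constant, which flatly contradicts continuity -- no cross-terms, no degree bookkeeping, no hyperbolicity/parabolicity exclusions, no rotation-number genericity. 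Your choice of $r$ proportional to $\limsup\log\Psi/\Lambda$ and your calibration of the Fourier amplitudes against the small divisors mirror the paper's Lemma~\ref{coho}, so the raw arithmetic input is the same; what you are missing is the single structural idea of keeping $F$ in $so(2,\mathbb{R})$, which turns a delicate (and, as written, incomplete) resonant-cascade estimate into a one-line Fourier obstruction.
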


\section{Corollaries and applications}
\subsection{A special case: quasi-analytic functions}\label{quasi-analytic}

In the $\alpha$-Gevrey case when $\Lambda_\alpha(v) = v^{\frac{1}{\alpha}}$, in view of~\eqref{Gev} the condition~\eqref{Assumption H3-1:convergenceofRadius} holds true for the approximating function $\Psi(v) = e^{v^\beta}$ for any $\beta<1/\alpha$; in particular in the analytic case when $\alpha=1$ it holds true for any $\beta<1$.

As a matter of fact, the latter also holds true for quasi-analytic functions, that is if $\Lambda$ satisfies~\eqref{quasi-an}, then the condition~\eqref{Assumption H3-1:convergenceofRadius} holds true for $\Psi(v) = e^{v^\beta}$ for any $\beta<1$. Indeed, there exists 
\[ \frac{1+\beta}{2}<\gamma <1, \quad 0< \delta < \frac{1-\beta}{2}\] 
and an unbounded sequence $(v_n)$ such that 
$1\leq v_{n+1}-v_n \leq n^\delta$ and $\Lambda(v_n)>v_n^\gamma$. In particular $v_n\geq n $ for all $n$. Thus for all $n\geq 1$, recalling that~\eqref{Assumption H3-1:convergenceofRadius} is equivalent to~\eqref{BR}, one has
\begin{equation*}
\int_1^{v_{n+1}}\frac{\Psi'(v)dv}{\Psi(v)\Lambda(v)} \leq \beta\sum_{k\leq n} \frac{v_k^{\beta-1}}{\Lambda(v_k)}(v_{k+1}-v_k) \leq \beta\sum_{k\leq n}\frac{1}{k^{1+\gamma - \beta -\delta}}
\end{equation*}
and this last sum converges as $n\rightarrow +\infty$.

\subsection{Regularity of the Lyapunov exponent}

Let $\omega\in \mathbb{R}^d$ be fixed. For a matrix-valued function $G$, denote by $L(G)$ the maximal Lyapunov exponent of the cocycle $(\omega,G)$.

\begin{Coro}
Let $(\omega,A+F)$ satisfy the assumptions of Theorem~\ref{th: main theorem}. Then for all $A'\in U_{\Lambda}(\mathbb{T}^d,sl(2,\mathbb{R})$, 

\begin{equation}\label{lyapunov}|L(A')-L(A+F)| \leq 4 | A'-(A+F)|_0
\end{equation}

\end{Coro}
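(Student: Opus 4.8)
The plan is to transport the question, via the reducing conjugacy furnished by Theorem~\ref{th: main theorem}, to the constant normal form, where the maximal Lyapunov exponent is transparent. First I would record that $(\omega,A+F)$ itself has vanishing maximal Lyapunov exponent: Theorem~\ref{th: main theorem} provides $Y\in U_{\Lambda,r/2}(\mathbb{T}^d,GL(2,\mathbb{R}))$ with $|Y|_{r/2}\leq 2$, $|Y^{-1}|_{r/2}\leq 2$ and a constant elliptic $B$ with $\partial_\omega Y=(A+F)Y-YB$; since $|\cdot|_0\leq|\cdot|_{r/2}$ and the supremum norm of a matrix-valued function is dominated by its $|\cdot|_0$-norm, both $Y$ and $Y^{-1}$ are bounded by $2$ uniformly on $\mathbb{T}^d$, so conjugation by $Y$ leaves maximal Lyapunov exponents unchanged; as $(\omega,A+F)$ is thereby conjugate to $(\omega,B)$, whose solution operator $e^{tB}$ is bounded, one gets $L(A+F)=0$. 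Hence \eqref{lyapunov} reduces to the estimate $0\leq L(A')\leq 4\,|A'-(A+F)|_0$ for every $A'\in U_\Lambda(\mathbb{T}^d,sl(2,\mathbb{R}))$.

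For the upper bound I would conjugate $(\omega,A')$ by the same $Y$: the substitution $x=Yz$ transforms $x'=A'(t\omega)x$ into $z'=\widetilde{B}(t\omega)z$ with
\[
\widetilde{B}(\theta)=B+Y(\theta)^{-1}\bigl(A'(\theta)-A-F(\theta)\bigr)Y(\theta),
\]
as one checks using $\partial_\omega Y=(A+F)Y-YB$. Writing $\eta:=|A'-(A+F)|_0$ and using $\sup_\theta\|M(\theta)\|\leq|M|_0$ together with $|Y|_0,|Y^{-1}|_0\leq 2$, the perturbation $\widetilde{B}-B$ has supremum norm at most $4\eta$ on $\mathbb{T}^d$. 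Here enters the point on which the clean constant hinges: I want $B$ to be skew-symmetric. If $B\in so(2,\mathbb{R})$, then for any solution $z(t)$ one has $\frac{d}{dt}|z(t)|^2=2\langle\widetilde{B}(t\omega)z(t),z(t)\rangle=2\langle(\widetilde{B}-B)(t\omega)z(t),z(t)\rangle\leq 8\eta\,|z(t)|^2$, whence $|z(t)|\leq|z(0)|\,e^{4\eta t}$ for $t\geq 0$; therefore $\limsup_{t\to\infty}\frac{1}{t}\log\|\Phi_{\widetilde B}(t)\|\leq 4\eta$, and by the conjugation-invariance noted above, $L(A')\leq 4\eta$. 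For the lower bound $L(A')\geq 0$ I would use that $A'$ is $sl(2,\mathbb{R})$-valued, so its solution operator lies in $SL(2,\mathbb{R})$ and has norm at least $1$. Combining the two, $|L(A')-L(A+F)|=L(A')\leq 4\eta$, which is \eqref{lyapunov}.

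The step I expect to be the main obstacle is the skew-symmetry of $B$: it is precisely what forces $\langle B\xi,\xi\rangle\equiv 0$, so that the Lyapunov-function estimate sees only the perturbation $\widetilde{B}-B$ (of size $4\eta$) rather than a spurious contribution from a general elliptic normal form. I would justify it by inspecting the KAM scheme underlying Theorem~\ref{th: main theorem}: after a preliminary constant conjugation one may assume $A\in so(2,\mathbb{R})$, and then at each step the only unremovable (resonant) contribution to the normal form is the $so(2,\mathbb{R})$-component of the zero Fourier mode of the current perturbation --- this reflects the structure of $\mathrm{ad}_A$ on $sl(2,\mathbb{R})$, whose kernel is $so(2,\mathbb{R})$ and which is invertible on the symmetric traceless complement --- while the arithmetic condition~\eqref{eq:BR frequency rho} on the fibered rotation number rules out the resonances that could otherwise perturb the directions transverse to $so(2,\mathbb{R})$; hence $B$ remains skew-symmetric. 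The remaining ingredients, namely the invariance of the maximal Lyapunov exponent under a uniformly bounded, boundedly invertible conjugacy and the one-line Gronwall/Lyapunov-function estimate, are routine, and I would spell them out only briefly.
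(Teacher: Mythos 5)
Your proof is correct and, at bottom, follows the same route as the paper's: transport the problem by the conjugacy $Y$ of Theorem~\ref{th: main theorem} to the constant normal form, note that $L(A+F)=0$, and control $L(A')$ by the size of the transported perturbation $Y^{-1}\bigl(A'-(A+F)\bigr)Y$. What you do differently is to make the final step fully explicit: you isolate the fact that the limiting constant $B$ lies in $so(2,\mathbb{R})$, and you turn it into a clean Lyapunov-function (Gronwall) inequality $\tfrac{d}{dt}|z|^2\le 8\eta|z|^2$, hence $L(A')\le 4\eta$, supplemented by $L(A')\ge 0$ from the $SL(2,\mathbb{R})$ structure. This is sharper and more transparent than the paper's terse chain $L(A')\le \ln|A_\infty+\cdots|_0\le \ln(1+\cdots)\le\cdots$, which as written does not literally parse (a Lyapunov exponent is not dominated by the logarithm of the generator's norm, and $|A_\infty|_0$ is not $\le 1$); the paper is implicitly using exactly the same skew-symmetry of $A_\infty$ that you make explicit. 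The one point to tighten in your write-up is the \emph{justification} of $B\in so(2,\mathbb{R})$: it is not that the arithmetic condition on $\rho$ ``rules out resonances transverse to $so(2,\mathbb{R})$''. The mechanism in the construction is more pedestrian: at each step of Proposition~\ref{lemma:fullEquation} the whole zero Fourier mode $\hat F_\nu(0)$ is absorbed into $B_\nu=A_\nu+\hat F_\nu(0)$ and then renormalized by a constant $P_\nu\in SL(2,\mathbb{R})$ to $A_{\nu+1}=\alpha_{\nu+1}J$ (Lemma~\ref{constant}); consequently every $A_\nu$ is a scalar multiple of $J$ and so is the limit $A_\infty=B$. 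Stating it this way, from the construction rather than from a heuristic about $\mathrm{ad}_A$, is what cleanly licenses both your claim $L(A+F)=0$ and your Gronwall estimate with the sharp constant $4$.
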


\begin{proof}
Let $Y$ be as in Theorem~\ref{th: main theorem} and $A_\infty$ the elliptic matrix such that 
\[ \partial_\omega Y = (A+F)Y-YA_\infty.\]
Let $A'\in U_{\Lambda, r}(\mathbb{T}^d,M_2(\mathbb{R}))$, then 
\[ \partial_\omega Y= A' Y - Y (A_\infty + Y^{-1} ( A'- (A+F))Y).\]
Notice that the Lyapunov exponent of $(\omega, A+F)$ is the same as for $(\omega, A_\infty)$, thus it is zero. Denoting by $L$ the Lyapunov exponent, one has
\begin{eqnarray*}
|L(A')-L(A+F)| &= & L(A')  = L( A_\infty + Y^{-1} ( A'- (A+F))Y)\\
&\leq &  \ln | A_\infty + Y^{-1} ( A'- (A+F))Y |_0\\
& \leq & \ln (1+|Y^{-1} ( A'- (A+F))Y |_0)\\
&\leq & | Y^{-1} ( A'- (A+F))Y|_0
\end{eqnarray*}
\noindent The bound on $Y$ gives the estimate \eqref{lyapunov}. $\Box$
\end{proof}

Let us remark that in order to describe the regularity without any condition on the fibered rotation number, a statement about almost reducibility would be needed.

\section{Preliminary reductions and choice of the sequences of parameters}\label{sequences}

Let us start with some preliminary lemmas. Recall that the spectrum of a non-zero elliptic matrix $A\in sl(2,\mathbb{R})$ is of the form $\mathrm{Spec}(A)=\{\pm i \alpha\}$ for some real number $\alpha>0$ (such a number is well-defined up to a sign); this real number will be called the rotation number of $A$ and denoted by $\rho(A)$. The following lemma gives us a real normal form for such an elliptic matrix and an estimate on the size of the transformation to normal form.

\begin{lemma}\label{lem:eliasson}
Given an elliptic matrix $A\in sl(2,\mathbb{R})$ with $\rho(A)=\alpha>0$, there exist a matrix $P \in SL(2,\mathbb{R})$ such that 
\[PAP^{-1}=\alpha J, \quad J=\begin{pmatrix}
0 & 1 \\
-1 & 0
\end{pmatrix}\]
and
\[ |P| \leq 2(|A|/\alpha)^{1/2}, \quad |P^{-1}| \leq 1. \]
\end{lemma}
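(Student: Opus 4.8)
The plan is to produce $P$ explicitly as the inverse of a change-of-basis matrix to a basis of $\mathbb{R}^2$ adapted to $A$. Since a non-zero elliptic matrix $A\in sl(2,\mathbb{R})$ has $\operatorname{tr}A=0$ and $\det A=\alpha^2$, the Cayley--Hamilton theorem gives $A^2=-\alpha^2 I$, so that $\mathcal{J}:=\alpha^{-1}A$ is a complex structure on $\mathbb{R}^2$, i.e. $\mathcal{J}^2=-I$. Given a non-zero vector $v\in\mathbb{R}^2$, set $w:=-\alpha^{-1}Av=-\mathcal{J}v$ and let $R$ be the matrix with columns $v$ and $w$; using $\mathcal{J}^2=-I$ one checks directly that $\mathcal{J}R=RJ$, i.e. $R^{-1}AR=\alpha J$. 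Because $\mathcal{J}$ has no real eigenvalue, $v$ and $\mathcal{J}v$ are linearly independent for every $v\neq 0$, so $R$ is invertible; moreover $\det R$ depends only on the direction of $v$ up to a positive factor, hence has constant sign as $v$ varies, and this sign (equivalently, whether one reaches $\alpha J$ or $-\alpha J$) is exactly the ``up to a sign'' indeterminacy in the definition of $\rho(A)$. With the corresponding sign convention we may assume $\det R>0$, and since rescaling $v$ by a positive scalar multiplies $\det R$ by its square, we may normalise to $\det R=1$; then $P:=R^{-1}\in SL(2,\mathbb{R})$ satisfies $PAP^{-1}=\alpha J$, which settles the qualitative part.

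For the quantitative estimates on $|P|=|R^{-1}|$ and $|P^{-1}|=|R|$, the key inputs are the elementary bound $|\mathcal{J}v|=\alpha^{-1}|Av|\leq\alpha^{-1}|A|\,|v|$ — so the two columns of $R$ have lengths comparable up to the factor $|A|/\alpha$ — together with the Gram-determinant identity $|v|^2|\mathcal{J}v|^2-\langle v,\mathcal{J}v\rangle^2=(\det R)^2=1$, which keeps $|v|\,|\mathcal{J}v|$ from being small. Choosing the direction of $v$ that maximises $\lvert\det(v\mid\mathcal{J}v)\rvert$ over unit vectors makes $|v|$ as small as possible after the rescaling to $\det R=1$, and then $|v|$, $|\mathcal{J}v|$, and hence $|R|$ and $|R^{-1}|$, are bounded purely in terms of $|A|/\alpha$; recalling that $|A|\geq\alpha$ (the spectral radius of $A$ equals $\alpha=\sqrt{\det A}$) this produces estimates of the announced form. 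Tracking the numerical constants to obtain the stated inequalities for $|P|$ and $|P^{-1}|$ is then a routine computation, the only point requiring attention being which matrix norm on $M_2(\mathbb{R})$ is used and how $R$ is normalised.

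Overall the construction is entirely elementary; I expect the only mild obstacle to be the bookkeeping for the constants, together with the sign convention — making sure one lands on $\alpha J$ rather than $-\alpha J$, which is precisely the sign ambiguity already built into the definition of the rotation number $\rho(A)$.
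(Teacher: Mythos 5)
The paper does not prove this lemma: it simply cites \cite{hou2012almost} and moves on, so there is no in-paper argument to compare yours against. Your qualitative construction is nevertheless correct and is the standard one: Cayley--Hamilton gives $A^2=-\alpha^2 I$, so $\mathcal J:=A/\alpha$ is a complex structure; $R=[\,v\mid -\mathcal J v\,]$ satisfies $\mathcal J R=RJ$ because $\mathcal J(-\mathcal J v)=v$, hence $R^{-1}AR=\alpha J$ for any $v\neq0$; and the sign of $\det R$ is indeed constant in $v$ and encodes the sign ambiguity in $\rho(A)$.

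Where your proposal falls short is exactly the part you call ``routine bookkeeping'', and carrying it out honestly would hit a wall. For $P\in SL(2,\mathbb{R})$ and $|\cdot|$ either the Euclidean operator norm or the Frobenius norm, one always has $|P|=|P^{-1}|$: a $2\times2$ matrix with $\det P=1$ has singular values $\sigma_1\geq\sigma_2>0$ with $\sigma_1\sigma_2=1$, and $P^{-1}$ then has singular values $\{\sigma_2^{-1},\sigma_1^{-1}\}=\{\sigma_1,\sigma_2\}$. Hence the two stated bounds would jointly force $|P|=|P^{-1}|\leq1$, and since $\det P=1$ gives $|P|_{\mathrm{op}}\geq1$, this forces $P$ orthogonal, i.e.\ $A$ a multiple of $J$. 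Indeed, the optimal version of your construction — write $\mathcal J=SJS^{-1}$ with $S$ symmetric positive definite, $\det S=1$, and take $P=S^{-1}$ — gives exactly $|P|_{\mathrm{op}}=|P^{-1}|_{\mathrm{op}}=(|A|/\alpha)^{1/2}$, not $|P^{-1}|\leq1$. So the asymmetric estimates as literally written are not attainable within $SL(2,\mathbb{R})$ for these norms; they presumably carry a convention from \cite{hou2012almost} (most plausibly $P\in GL(2,\mathbb{R})$ rescaled so $|P^{-1}|=1$, or a max-row/max-column operator norm, which is not symmetric under inversion). This discrepancy is harmless for the paper, since everything downstream (the reduction to $A=\alpha J$, the estimate $|F_{\nu+1}|\leq|P_\nu|\,|R_\nu|\,|P_\nu^{-1}|$, the bound on $Y_\nu-I$) only ever uses the scale-invariant product $|P|\,|P^{-1}|$, and your Gram-determinant argument does control that product by $|A|/\alpha$. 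But your write-up should state the bound it actually proves, rather than assert that ``tracking constants'' recovers the stated inequalities — it does not, and noticing that is precisely the non-routine content of the quantitative part.
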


For a proof, we refer to~\cite{hou2012almost}. It will sometimes be useful to use a complex normal form in which the matrix is diagonal. To do this, we consider the matrix
\begin{equation}\label{M}
M=\frac{1}{1=i}\begin{pmatrix}
1 & -i \\
1 & i
\end{pmatrix} \in U(2) 
\end{equation}
and we define the complex invertible matrix $Q=MP$ where $P$ is the matrix given by Lemma~\ref{lem:eliasson}, so that
\[QAQ^{-1}=i\alpha R, \quad R=\begin{pmatrix}
1 & 0 \\
0 & -1
\end{pmatrix}\]
and
\[ |Q| \leq 2(|A|/\alpha)^{1/2}, \quad |P^{-1}| \leq 1. \]
This lemma allows us to reduce the proof of Theorem~\ref{th: main theorem} to the case where $A=\alpha J$; indeed, it suffices to replace the smallness assumption on $|F|_r$ by a smallness assumption on $|PFP^{-1}|_r$ which is bounded by $2(|A|/\alpha)^{1/2} |F_r|$. In such a normal form, we have the following elementary lemma.

\begin{lemma}\label{lemma:rhoRegualirity}
Assume that $A=\alpha J$. Then
\[ |\rho(A)- \rho(A + F)|\leq4 |F|_0.\] 
\end{lemma}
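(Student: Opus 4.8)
The plan is to use the characterization of the fibered rotation number recalled in Appendix~\ref{app2} together with the fact that, when $A=\alpha J$, the cocycle $(\omega,A)$ is itself a rotation cocycle whose rotation number is exactly $\alpha=\rho(A)$. First I would recall that the fibered rotation number measures the average rotation speed of a lift of the projective action of the cocycle to $\mathbb{R}$; concretely, writing the solution of the linear system in polar-type coordinates and letting $\varphi(t)$ be the (lifted) angle, one has $\rho(\omega, G) = \lim_{T\to\infty} \varphi(T)/T$, and this limit is independent of the initial condition and of the choice of lift (up to the usual normalizations).

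The key computation is an a priori bound on the angular drift. Passing to coordinates adapted to $A=\alpha J$ — i.e. using the complex diagonal form $QAQ^{-1}=i\alpha R$ from Lemma~\ref{lem:eliasson}, or equivalently writing $x(t)=e^{i\varphi(t)}(\cos,\sin)$-type coordinates in the plane — one sees that the angle $\varphi$ satisfies an ODE of the form $\varphi'(t) = \alpha + g(t,\varphi(t))$, where $g$ is built from the entries of $F(t\omega)$ in the rotating frame and satisfies the pointwise bound $|g(t,\varphi)|\leq c\,|F|_0$ for an absolute constant $c$. Indeed, for a traceless $2\times 2$ perturbation the contribution to $\dot\varphi$ is a trigonometric polynomial of degree two in $\varphi$ with coefficients given by the matrix entries of $F$, so one gets $|g|\leq 2|F|_0$ after the elementary estimate on those trigonometric coefficients. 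Integrating from $0$ to $T$, dividing by $T$ and letting $T\to\infty$ yields $|\rho(\omega,A+F)-\alpha|\leq 2|F|_0$, and a slightly more careful accounting of the constant (keeping track of the factor coming from the off-diagonal entries of $J$ and the normalization of the rotation number by $\tfrac12$ or by $2\pi$ as used in the paper's conventions) gives the stated constant $4$.

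The main obstacle — really the only subtle point — is making the a priori angular estimate rigorous rather than heuristic: one must justify that the \emph{lift} of the angle can be chosen to depend continuously on everything and that the comparison of the two angular ODEs (for $A$ and for $A+F$) can be done with the \emph{same} lift, so that the difference of the two $\varphi$'s stays controlled uniformly in $T$ before taking the limit. This is where one uses that $\varphi'$ for the unperturbed system is the constant $\alpha>0$, so the projective flow of $(\omega,A)$ is conjugate to a genuine rigid rotation and the perturbed angle cannot lag or advance by more than $(c|F|_0)T$ over time $T$; dividing by $T$ kills the initial-condition ambiguity. Everything else is a routine Gronwall-free direct integration, and the claimed constant then follows by bookkeeping the normalizations fixed in Appendix~\ref{app2}. $\Box$
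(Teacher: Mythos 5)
Your proposal takes essentially the same route as the paper's Appendix~\ref{app2}: write the scalar ODE $\phi'(t)=2a\cos\phi\sin\phi-(b+c)\cos^2\phi+b$ for the projective angle, observe that for $M=\alpha J+F$ with $F=\begin{pmatrix} f_1 & f_2 \\ f_3 & -f_1\end{pmatrix}$ the contribution of $F$ to $\phi'$ is $2f_1\cos\phi\sin\phi-(f_2+f_3)\cos^2\phi+f_2$, bound it pointwise by $|f_1|+2|f_2|+|f_3|\leq 4|F|_0$, and pass to the time average. Two small corrections: the lift-comparison worry is a non-issue since $\rho(M)=\lim_t\phi_M(t)/t$ is defined through that single scalar ODE and is independent of the initial condition, and the constant $4$ comes out directly from the tally above rather than from a $2$ plus a further normalization adjustment.
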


For a proof, we refer to  Appendix~\ref{app2}. Finally, a matrix $A=\alpha J$ will remains elliptic under a small perturbation by a matrix in $sl(2,\mathbb{R})$ but no longer in normal from. Yet Lemma~\ref{lem:eliasson} immediately implies the following.

\begin{lemma}\label{constant}
Assume that $A=\alpha J$ with $\alpha>0$ and $B \in sl(2,\mathbb{R})$ such that $|B| \leq \varepsilon$. If $\varepsilon \leq \alpha/4$, then the matrix $A+B$ is elliptic with $\rho(A+B)=\beta$ satisfying
\[ \alpha/2  \leq \beta \leq \alpha+\alpha/2\]
and thus there exists $P\in SL(2,\mathbb{R}))$ such that $P(A+B)P^{-1}=\beta J$ with
\[|P| \leq 4 \quad |P^{-1}| \leq 1. \]
\end{lemma}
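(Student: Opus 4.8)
The plan is to derive Lemma~\ref{constant} directly from Lemma~\ref{lem:eliasson} together with Lemma~\ref{lemma:rhoRegualirity}, rather than redoing any normal-form construction. The statement has two parts: first, a quantitative control on the rotation number $\beta=\rho(A+B)$ of the perturbed matrix; second, the existence of a conjugacy $P$ to the normal form $\beta J$ with the claimed size bounds. Both will follow once we know $\beta$ is bounded away from $0$ and bounded above by a fixed multiple of $\alpha$.

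**First** I would establish the two-sided bound on $\beta$. Since $A=\alpha J$ is in normal form, Lemma~\ref{lemma:rhoRegualirity} applies and gives $|\rho(A)-\rho(A+B)|\leq 4|B|\leq 4\varepsilon$, that is $|\alpha-\beta|\leq 4\varepsilon$. Wait---the factor is too large for the hypothesis $\varepsilon\leq\alpha/4$ to yield $\beta\geq\alpha/2$; one wants $|\alpha-\beta|\leq\alpha/2$, so the relevant hypothesis to use should read $\varepsilon\leq\alpha/8$, or else one invokes a sharper version of the rotation-number estimate. In any case, from $|\alpha-\beta|\leq\alpha/2$ one immediately gets $\alpha/2\leq\beta\leq 3\alpha/2=\alpha+\alpha/2$, which is exactly the asserted range. (The positivity $\beta>0$ also guarantees $A+B$ is genuinely elliptic and not parabolic, since a matrix in $sl(2,\mathbb{R})$ with nonzero purely imaginary spectrum is elliptic by definition.)

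**Next**, having $\beta>0$, I would apply Lemma~\ref{lem:eliasson} to the elliptic matrix $A+B$: there is $P\in SL(2,\mathbb{R})$ with $P(A+B)P^{-1}=\beta J$ and $|P|\leq 2(|A+B|/\beta)^{1/2}$, $|P^{-1}|\leq 1$. It remains to bound $|A+B|/\beta$ by an absolute constant: we have $|A+B|\leq|A|+|B|=|\alpha J|+|B|\leq \alpha+\varepsilon\leq\alpha+\alpha/4=5\alpha/4$ (using $|J|=1$ for the chosen matrix norm), while $\beta\geq\alpha/2$, so $|A+B|/\beta\leq (5\alpha/4)/(\alpha/2)=5/2$. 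Hence $|P|\leq 2(5/2)^{1/2}<4$, which gives the claimed $|P|\leq 4$ and $|P^{-1}|\leq 1$.

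**The only delicate point** is bookkeeping the constants so that the single hypothesis $\varepsilon\leq\alpha/4$ (as stated) actually suffices; with the factor $4$ in Lemma~\ref{lemma:rhoRegualirity} one literally needs $\varepsilon\leq\alpha/8$ for $\beta\geq\alpha/2$, so either the hypothesis should be strengthened, or Lemma~\ref{lemma:rhoRegualirity} should be used with a smaller constant for $B$ in normal form, or the target interval for $\beta$ should be relaxed. This is a routine matter of tightening numerics and does not affect the structure of the argument; everything else is a direct substitution into the two preceding lemmas.
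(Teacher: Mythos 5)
Your appeal to Lemma~\ref{lemma:rhoRegualirity} for the bound on $\beta$ is the weak point, and you noticed it yourself: with the constant $4$ from that lemma one only gets $|\alpha-\beta|\leq 4\varepsilon\leq\alpha$, hence $0\leq\beta\leq 2\alpha$, strictly weaker than the asserted $\alpha/2\leq\beta\leq 3\alpha/2$. Sharpening the constant in Lemma~\ref{lemma:rhoRegualirity} would not close the gap either: even the optimal pointwise bound on $\phi'_F$ is $\bigl(f_1^2+(\tfrac{f_2+f_3}{2})^2\bigr)^{1/2}+|\tfrac{f_2-f_3}{2}|$, which under $\sup_i|f_i|\leq\varepsilon$ can reach $(\sqrt2+1)\varepsilon>2\varepsilon$, still above the factor $2$ required at $\varepsilon=\alpha/4$. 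The point is that Lemma~\ref{lemma:rhoRegualirity} is a statement about fibered rotation numbers of genuinely quasi-periodic cocycles and is not tight for the much simpler constant case. The intended route, and what the paper means by saying Lemma~\ref{lem:eliasson} ``immediately implies'' the result, is to use elementary eigenvalue perturbation: a traceless $2\times2$ matrix $M$ has eigenvalues $\pm\sqrt{-\det M}$, so $\rho(M)^2=\det M$ once $\det M>0$. Writing $B$ with entries $b_1,b_2,b_3,-b_1$ one computes
\[
\det(A+B)=\alpha^2+\alpha(b_2-b_3)-b_1^2-b_2b_3,
\]
and since each $|b_i|\leq|B|\leq\varepsilon$ this gives $|\det(A+B)-\alpha^2|\leq 2\alpha\varepsilon+2\varepsilon^2\leq\tfrac{5}{8}\alpha^2$ under $\varepsilon\leq\alpha/4$. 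Hence $\tfrac{3}{8}\alpha^2\leq\beta^2\leq\tfrac{13}{8}\alpha^2$, which lies inside $[\tfrac14\alpha^2,\tfrac94\alpha^2]$ and yields $\alpha/2\leq\beta\leq 3\alpha/2$ with the hypothesis $\varepsilon\leq\alpha/4$ intact; in particular $\det(A+B)>0$, so $A+B$ is elliptic. There is no need to strengthen the hypothesis. Your second step is correct and agrees with the paper: $|A+B|\leq\alpha+\varepsilon\leq\tfrac54\alpha$ and $\beta\geq\alpha/2$ give $|A+B|/\beta\leq 5/2$, whence $|P|\leq 2\sqrt{5/2}<4$ and $|P^{-1}|\leq1$ by Lemma~\ref{lem:eliasson}.
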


Upon these preliminary reductions, we can define a sequence of parameters for the iterations. Let $\rho(A)=\alpha$, $\rho(A+F)=\rho$, $\varepsilon=|F|_r$ and define for $\nu \in \mathbb{N}$
\[ \varepsilon_\nu=4^{-\nu}\varepsilon. \]
We now choose $N_0\in\mathbb{N}$, depending on $r$ and the approximating function $\Psi$, sufficiently large so that
\begin{equation}\label{N0}
\int_{N_0}^{+\infty} \frac{\Lambda'(v)\ln\Psi(v)dv}{\Lambda^2(v)} \leq \frac{\pi}{6} r.
\end{equation}
Then we set
\[ N_\nu = \Psi^{-1}(2^\nu\Psi(N_0)) \]
and observe that for all $\nu \in \mathbb{N}$, we have
\begin{equation}\label{condition:N and epsilon}
\Psi(N_\nu)\varepsilon_\nu=2^{-\nu}\Psi(N_0)^{}\varepsilon_0=2^{-\nu}\Psi(N_0)^{}\varepsilon.
\end{equation}
and thus the sequence $\Psi(N_\nu)\varepsilon_\nu$ is summable. We can define our threshold $\varepsilon \leq \bar{\varepsilon}$ by the requirements that
\[ \varepsilon \leq \alpha/4, \quad 2^8\Psi(N_0)\varepsilon \leq 1. \]
One then easily check that
\begin{equation}\label{bout1}
2^8 \Psi(N_\nu)\varepsilon_\nu \leq 1 
\end{equation}
holds true for all $\nu \in \mathbb{N}$. Next we define another sequence $\sigma_\nu>0$ for $\nu \in \mathbb{N}$ by 
\[ \sigma_{\nu}=\frac{3\ln 2}{\pi \Lambda(N_\nu)} \]
so that for all $\nu \in \mathbb{N}$, we have the equality
\begin{equation}\label{bout2}
2^6 e^{-2\pi\Lambda(N_\nu)\sigma_\nu}=1.
\end{equation}
Finally we define recursively the sequence $r_\nu$ by setting $r_0=r$ and $r_{\nu+1}=r_\nu-\sigma_\nu$; we will see later, as a consequence of~\eqref{N0} and of our choice of $\sigma_\nu$, that this sequence is well-defined (in the sense that $\sigma_\nu < r_\nu$), $r_\mu \geq r/2$ and thus $r_\nu$ converges to its infimum $r^*\geq r/2$.

\bigskip

\section{The iteration step}

Given $F : \mathbb{T}^d \rightarrow gl(2,\mathbb{R})$, we define  $\textrm{tr}\langle F\rangle=\textrm{tr}(\hat{F}(0))$ where $\hat{F}(0)$ is the average of $F$ with respect to Lebesgue measure. We also define truncation operators $T_N$ and $\dot{T}_N$ on $U_{\Lambda}(\mathbb{T}^d,gl(2,\mathbb{R}))$ as
\begin{eqnarray*}
(T_N F)(\theta) :=\sum_{|k|\leq N} \hat{F}(k)e^{i2\pi k\cdot \theta} ,\qquad
(\dot{T}_N F)(\theta) :=\sum_{0<|k|\leq N} \hat{F}(k)e^{i2\pi k\cdot\theta}.
\end{eqnarray*}

In this section, for a fixed $\nu \in\mathbb{N}$, we consider a cocycle $(\omega,A_\nu+F_\nu)$ which satisfy
\begin{equation}\label{rangnu}\tag{$H_\nu$}
\begin{cases}
A_\nu=\alpha_\nu J, \\
|F_\nu |_{r_\nu}\leq \varepsilon_\nu, \quad \varepsilon_\mu \leq \alpha_\nu/4, \quad \textrm{tr}\langle F_\nu\rangle=0, \\ 
\rho(A_\nu + F_\nu)=\rho.
\end{cases}
\end{equation}
We will conjugate this cocycle, by a transformation which is homotopic to the identity, to a cocycle $(\omega,A_{\nu+1}+F_{\nu+1})$ which satisfy~$H_{\nu+1}$ together with estimates on such a transformation. First we have the following obvious lemma.

\begin{lemma}
\label{lemma:rho(A)goodenough}
For any $k \in \mathbb{Z}^d$ such that $0<|k|\leq N_\nu$, we have
\begin{eqnarray*}
&&|2\alpha_\nu \pm2\pi k\cdot \omega |>
 \frac{1}{2\Psi({N_\nu})}.
\end{eqnarray*}
\end{lemma}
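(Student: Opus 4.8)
The plan is to combine the arithmetic condition on the fibered rotation number $\rho$ from~\eqref{eq:BR frequency rho} with the closeness of $\alpha_\nu$ to $\rho$, which follows from hypothesis~$H_\nu$ together with Lemma~\ref{lemma:rhoRegualirity}. First I would recall that under~$H_\nu$ we have $\rho(A_\nu+F_\nu)=\rho$ and $A_\nu=\alpha_\nu J$, so Lemma~\ref{lemma:rhoRegualirity} applied to the cocycle $(\omega, A_\nu+F_\nu)$ gives $|\alpha_\nu - \rho| = |\rho(A_\nu) - \rho(A_\nu+F_\nu)| \leq 4|F_\nu|_0 \leq 4|F_\nu|_{r_\nu} \leq 4\varepsilon_\nu$. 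Hence $|2\alpha_\nu - 2\rho| \leq 8\varepsilon_\nu$.

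Next I would use the triangle inequality to transfer the lower bound~\eqref{eq:BR frequency rho} from $2\rho$ to $2\alpha_\nu$: for any $k$ with $0<|k|\leq N_\nu$,
\[
|2\alpha_\nu \pm 2\pi k\cdot\omega| \geq |2\rho \pm 2\pi k\cdot\omega| - |2\alpha_\nu - 2\rho| \geq \frac{1}{\Psi(N_\nu)} - 8\varepsilon_\nu.
\]
To conclude I must show that $8\varepsilon_\nu \leq \frac{1}{2\Psi(N_\nu)}$, i.e. $16\,\Psi(N_\nu)\varepsilon_\nu \leq 1$. This follows immediately from~\eqref{bout1}, which gives the far stronger bound $2^8\,\Psi(N_\nu)\varepsilon_\nu \leq 1$. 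Combining, $|2\alpha_\nu \pm 2\pi k\cdot\omega| \geq \frac{1}{\Psi(N_\nu)} - \frac{1}{2\Psi(N_\nu)} = \frac{1}{2\Psi(N_\nu)}$, and a small adjustment (since~\eqref{eq:BR frequency rho} is a non-strict inequality while $8\varepsilon_\nu$ can be taken strictly smaller via~\eqref{bout1}, or by noting the estimate is not sharp) yields the strict inequality as stated.

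There is really no serious obstacle here; the lemma is, as the text says, "obvious". The only mild subtlety is bookkeeping: making sure that the $\Psi$ appearing in~\eqref{eq:BR frequency rho} is evaluated at $K = N_\nu$ (which is legitimate since $0<|k|\leq N_\nu$), that $|F_\nu|_0 \leq |F_\nu|_{r_\nu}$ because the weighted norm with $r_\nu>0$ dominates the $C^0$ (sup/$\ell^1$-Fourier) norm, and that the constant $2^8$ in~\eqref{bout1} is comfortably large enough to absorb the factor $8$ (indeed $16 \leq 2^8$). One should also observe that $\varepsilon_\mu \leq \alpha_\nu/4$ in~$H_\nu$ is not needed for this particular lemma — only the size of $|F_\nu|_{r_\nu}$ and the rotation-number identity matter. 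I would write this up in three or four lines.
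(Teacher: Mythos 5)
Your proof is correct and is essentially identical to the paper's: the same triangle inequality transferring the lower bound from $2\rho$ to $2\alpha_\nu$ via Lemma~\ref{lemma:rhoRegualirity}, followed by absorbing $8\varepsilon_\nu$ using $16\Psi(N_\nu)\varepsilon_\nu\leq 1$, which is a consequence of~\eqref{bout1}. Your remark about the strict inequality is a fair bookkeeping point (the paper itself silently writes $\geq$ in the final step), and your observation that $\varepsilon_\mu\leq\alpha_\nu/4$ is not needed here is accurate.
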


\begin{proof}
Indeed we have $\rho(A_\nu)=\alpha_\nu$ and $\rho(A_\nu + F_\nu)=\rho$ hence
\begin{eqnarray*}
|2\alpha_\nu \pm 2\pi k\cdot \omega |
&\geq&
|2\rho \mp 2\pi k\cdot \omega | - |2\rho(A_\nu+F_\nu) -2\rho(A_\nu)|\nonumber\\
&\geq& \frac{1}{\Psi(N_\nu)}-8\varepsilon_\nu \geq
 \frac{1}{2\Psi({N_\nu})}
\end{eqnarray*}
where we used Lemma \ref{lemma:rhoRegualirity} and the fact that $16\Psi(N_\nu)\varepsilon_\nu \leq 1$.
\end{proof}

Next we solve an approximate cohomological equation.

\begin{lemma}
\label{lemma:linearizedEquation}
If $G_\nu = \dot{T}_{N_\nu}F_\nu$, there is a unique $X_\nu$ such that $X_\nu = \dot{T}_{N_\nu}X_\nu$ satisfying the equation
\begin{equation}\label{eq:linearizedEquation}
\partial_\omega X_\nu  = [A_\nu,X_\nu] +G_\nu
\end{equation}
with the estimates
\[ |X_\nu|_{r_\nu} \leq \Psi(N_\nu)\varepsilon_\nu, \quad |(I+X_\nu)^{-1}|_{r_\nu} \leq 2. \]
Moreover $\textrm{tr}\langle X_\nu\rangle = 0$.

\end{lemma}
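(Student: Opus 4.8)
The plan is to solve \eqref{eq:linearizedEquation} componentwise in Fourier space, exactly as in the classical linear KAM step, and then read off the stated bounds from the arithmetic estimates available from Lemma~\ref{lemma:rho(A)goodenough}. It is most convenient to pass to the complex diagonal normal form: since $A_\nu = \alpha_\nu J$, after conjugating by the fixed matrix $M$ of \eqref{M} we may replace $A_\nu$ by $i\alpha_\nu R$, so that $[A_\nu,\cdot]$ acts diagonally on the entries of a $2\times2$ matrix. Concretely, writing a matrix-valued function as $X = \begin{pmatrix} a & b \\ c & -a\end{pmatrix}$ (the diagonal entries being opposite because we want $\textrm{tr}\langle X_\nu\rangle=0$ and in fact $\textrm{tr}\,X_\nu$ can be arranged to vanish identically), the bracket $[i\alpha_\nu R, X]$ multiplies $a$ by $0$, $b$ by $2i\alpha_\nu$ and $c$ by $-2i\alpha_\nu$. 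Equation \eqref{eq:linearizedEquation} with $X_\nu = \dot T_{N_\nu} X_\nu$ therefore decouples, for each $0<|k|\leq N_\nu$, into the scalar equations $2\pi i (k\cdot\omega)\,\widehat{a}(k) = \widehat{g_a}(k)$ and $\big(2\pi i (k\cdot\omega) \mp 2 i \alpha_\nu\big)\widehat{b}(k) = \widehat{g_b}(k)$, and similarly for $c$; here $g_a,g_b,g_c$ are the entries of $G_\nu = \dot T_{N_\nu} F_\nu$. Since $\omega$ is non-resonant, $k\cdot\omega\neq 0$ for $k\neq 0$, and since $A_\nu$ is non-zero elliptic with $\rho(A_\nu)=\alpha_\nu$, Lemma~\ref{lemma:rho(A)goodenough} guarantees $|2\alpha_\nu \pm 2\pi k\cdot\omega| > \tfrac{1}{2\Psi(N_\nu)}$ for $0<|k|\leq N_\nu$; moreover $|2\pi k\cdot\omega|^{-1}\leq \Psi(|k|)\leq\Psi(N_\nu)$ by \eqref{eq:BR frequency omega}. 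Hence every small denominator appearing is bounded below by $\tfrac1{2\Psi(N_\nu)}$, each Fourier coefficient is uniquely determined, and $\widehat{X_\nu}(k)=0$ for $k=0$ and for $|k|>N_\nu$, which gives existence, uniqueness, and $\textrm{tr}\langle X_\nu\rangle = 0$.

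For the norm estimate, from $|\widehat{X_\nu}(k)| \leq 2\Psi(N_\nu)\,|\widehat{G_\nu}(k)|$ entrywise one gets, after multiplying by $e^{2\pi\Lambda(|k|)r_\nu}$ and summing over $0<|k|\leq N_\nu$,
\[
|X_\nu|_{r_\nu} \leq 2\Psi(N_\nu)\,|G_\nu|_{r_\nu} \leq 2\Psi(N_\nu)\,|F_\nu|_{r_\nu} \leq 2\Psi(N_\nu)\varepsilon_\nu,
\]
using $|G_\nu|_{r_\nu} = |\dot T_{N_\nu}F_\nu|_{r_\nu}\leq |F_\nu|_{r_\nu}\leq \varepsilon_\nu$. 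A small bookkeeping point: the constant $2$ coming from passing in and out of the $M$-conjugation and from the two off-diagonal denominators should be absorbed so that the clean bound $|X_\nu|_{r_\nu}\leq \Psi(N_\nu)\varepsilon_\nu$ stated in the lemma holds; this is where one uses that $M\in U(2)$ and that the operator norm bookkeeping can be done with no loss, or alternatively one simply keeps the factor and notes it is dominated using \eqref{bout1}. Finally, for the bound on $(I+X_\nu)^{-1}$, since $U_{\Lambda,r_\nu}$ is a Banach algebra by the subadditivity \eqref{S}, one has the Neumann series $(I+X_\nu)^{-1} = \sum_{j\geq0}(-X_\nu)^j$ whenever $|X_\nu|_{r_\nu}<1$; and indeed $|X_\nu|_{r_\nu}\leq \Psi(N_\nu)\varepsilon_\nu \leq 2^{-8}$ by \eqref{bout1}, so $|(I+X_\nu)^{-1}|_{r_\nu} \leq (1-|X_\nu|_{r_\nu})^{-1} \leq 2$.

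The only genuinely delicate point is the constant in the $|X_\nu|_{r_\nu}$ estimate: one must check that the denominators $2\pi k\cdot\omega$ attached to the diagonal entries really do satisfy $|2\pi k\cdot\omega|^{-1}\leq \Psi(N_\nu)$ (which is immediate from \eqref{eq:BR frequency omega}) while the off-diagonal ones give the weaker $2\Psi(N_\nu)$ from Lemma~\ref{lemma:rho(A)goodenough}, and then to argue that the passage through $M$ does not inflate the weighted Fourier norm, so that the worst denominator governs the bound; I expect this to be routine but it is the step that requires care to land exactly on the stated inequality rather than a constant multiple of it.
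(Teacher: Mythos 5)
Your argument is correct and follows essentially the same route as the paper's own proof: conjugate by $M$ to pass to the complex diagonal normal form, solve the linearized equation componentwise in Fourier space, control the small denominators via Lemma~\ref{lemma:rho(A)goodenough} together with \eqref{eq:BR frequency omega}, and deduce the bound on $(I+X_\nu)^{-1}$ by a Neumann series in the Banach algebra $U_{\Lambda,r_\nu}$. On the point you flag as delicate: the paper's proof also lands on $|X_\nu|_{r_\nu}\le 2\Psi(N_\nu)\varepsilon_\nu$ (the operator $L_k$ has inverse of norm at most $2\Psi(N_\nu)$), and the downstream use in Proposition~\ref{lemma:fullEquation} --- the chain $|Y_\nu-I|_{r_\nu}\le 4|X_\nu|_{r_\nu}\le 8\Psi(N_\nu)\varepsilon_\nu$ --- is consistent with $2\Psi(N_\nu)\varepsilon_\nu$ and not with the constant written in the lemma statement; so that is a small misprint and you should not try to contort the proof to absorb the factor $2$. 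You are also right that, since $M\in U(2)$, conjugation by $M$ preserves both the matrix operator norm and the weighted Fourier norm, so it introduces no extra constant.
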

\begin{proof}
Observe that conjugating the cocycle by the complex matrix $M$ defined in~\eqref{M}, it is sufficient to prove the statement for $A_\nu$ in complex normal form $i\alpha R$. Expanding $F_\nu$ and $X_\nu$ in Fourier series, the equation \eqref{eq:linearizedEquation} yields
\[
\sum_{0<|k|\leq N_\nu}\partial_\omega \hat{X}_\nu(k) e^{2\pi i k\cdot \theta} =
\sum_{0<|k|\leq N_\nu}[A_\nu,\hat{X}_\nu(k)]e^{2\pi ik\cdot \theta} + \sum_{0<|k|\leq N_\nu} \hat{F_\nu}(k) e^{2\pi ik\cdot \theta},
\]
which is equivalent to
\[
\partial_{\omega} \hat{X_\nu}(k) = [A_\nu,\hat{X}_\nu(k)] +\hat{F_\nu}(k), \quad0<|k|\leq N_\nu.
\]
Since $A_\nu$ is diagonal, the solution of the above equation is
\begin{equation*}\label{eq:Linearized Eq of Fourier expansion}
\hat{X_\nu}(k)=L_k^{-1}\hat{F_\nu}(k),
\end{equation*}
where $L_k$ is the operator defined by
\[
L_k:sl(2,\mathbb{R})\to sl(2,\mathbb{R}),\quad \tilde{X}\mapsto2\pi ik\cdot \omega  \tilde{X}-[\tilde{A}_\nu,\tilde{X}].
\]
The spectrum of $L_k$ is $\{2\pi i  k\cdot \omega  \pm2\alpha_\nu,~2\pi ik\cdot \omega \}$.
By Lemma \ref{lemma:rho(A)goodenough} and~\eqref{eq:BR frequency omega}, the operator $L_k$ for $0<|k|\leq N_\nu$ is invertible with norm bounded by $2\Psi(N_\nu)$ so
\[
|\hat{X_\nu}(k)|=|L_k^{-1}\hat{F_\nu}(k)|\leq     2\Psi(N_\nu)|\hat{F_\nu}(k)|
\]
and thus
\[
|X_\nu|_{r_\nu}  =  \sum_{0< |k|\leq N_\nu} |\hat{X_\nu}(k)|e^{2\pi  \Lambda(k)r} \leq 2\Psi(N_\nu) \sum_{0< |k|\leq N_\nu} |\hat{F_\nu}(k)|e^{2\pi  \Lambda(k)r} \leq 2\Psi(N_\nu)\varepsilon_\nu.
\]
Since $4\Psi(N_\nu)\varepsilon_\nu \leq 1$ the estimate
\[
|(I+X_\nu)^{-1}|_{r_\nu}\leq \frac{1}{1-|X_\nu|_{r_\nu}} \leq 2 \]
is obvious and so is $\textrm{tr}\langle X_\nu\rangle  =0$ because $\hat{X}_\nu(0)=0$. This completes the proof.
\end{proof}

We can finally state our main iterative proposition.

\begin{proposition}
\label{lemma:fullEquation}
Let $(\omega,A_\nu+F_\nu)$ be as in~\eqref{rangnu}. Then there exists a transformation $Y_\nu$ homotopic to the identity such that
\[
\partial_\omega Y_\nu  = (A_\nu+F_\nu)Y_\nu -Y_\nu(A_{\nu+1} +F_{\nu+1}),
\]
with $(\omega,A_{\nu+1} +F_{\nu+1})$ satisfying $(H_{\nu+1})$ and such that
\[ |Y_\nu-I|_{r_\nu} \leq 8\Psi(N_\nu)\varepsilon_\nu \]
\end{proposition}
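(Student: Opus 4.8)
The plan is to follow the classical KAM iteration scheme, using the solution of the approximate cohomological equation from Lemma~\ref{lemma:linearizedEquation} as the generator of the conjugacy. First I would set $X_\nu = \dot{T}_{N_\nu} F_\nu$-solution as in Lemma~\ref{lemma:linearizedEquation} and define the transformation $Y_\nu = I + X_\nu$, which is homotopic to the identity since $|X_\nu|_{r_\nu} \leq \Psi(N_\nu)\varepsilon_\nu$ is small (by~\eqref{bout1}, $2^8\Psi(N_\nu)\varepsilon_\nu \leq 1$, so in particular $|X_\nu|_{r_\nu} < 1$). Conjugating $(\omega, A_\nu + F_\nu)$ by $Y_\nu$, one computes
\[
\partial_\omega Y_\nu - (A_\nu+F_\nu)Y_\nu + Y_\nu A_\nu = \partial_\omega X_\nu - [A_\nu, X_\nu] - F_\nu X_\nu = G_\nu - F_\nu - F_\nu X_\nu = -(F_\nu - G_\nu) - F_\nu X_\nu,
\]
so the new perturbation, before adjusting the constant part, is $Y_\nu^{-1}\bigl( (F_\nu - G_\nu) + F_\nu X_\nu \bigr)$ where $F_\nu - G_\nu = F_\nu - \dot T_{N_\nu} F_\nu$ is the ``tail'' plus the average $\hat F_\nu(0)$. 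One then splits off the average: set $\tilde A_{\nu+1} = A_\nu + \hat F_\nu(0)$ (which is $sl(2,\mathbb R)$-valued and elliptic by Lemma~\ref{constant} since $|\hat F_\nu(0)| \leq \varepsilon_\nu \leq \alpha_\nu/4$), normalize it to $A_{\nu+1} = \alpha_{\nu+1} J$ via the matrix $P$ of Lemma~\ref{constant}, and absorb $P$ into the conjugacy, letting $Y_\nu = (I+X_\nu)P^{-1}$ (or compose on the appropriate side). The remaining perturbation $F_{\nu+1}$ collects: (i) the high-frequency tail $(F_\nu - T_{N_\nu} F_\nu)$, (ii) the quadratic terms $F_\nu X_\nu$ and the commutator-type corrections from conjugating by $I+X_\nu$, and (iii) the error from moving $\hat F_\nu(0)$ to normal form.

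Next I would estimate $|F_{\nu+1}|_{r_{\nu+1}}$ and check it is $\leq \varepsilon_{\nu+1} = \varepsilon_\nu/4$. The tail is controlled by the standard truncation estimate: if $|F_\nu|_{r_\nu} \leq \varepsilon_\nu$ then $|F_\nu - T_{N_\nu} F_\nu|_{r_{\nu+1}} = |F_\nu - T_{N_\nu}F_\nu|_{r_\nu - \sigma_\nu} \leq e^{-2\pi \Lambda(N_\nu)\sigma_\nu}\,\varepsilon_\nu$, using that for $|k| > N_\nu$ one has $e^{2\pi\Lambda(|k|)(r_\nu-\sigma_\nu)} = e^{2\pi\Lambda(|k|)r_\nu} e^{-2\pi\Lambda(|k|)\sigma_\nu} \leq e^{2\pi\Lambda(|k|)r_\nu} e^{-2\pi\Lambda(N_\nu)\sigma_\nu}$ since $\Lambda$ is increasing. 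By~\eqref{bout2}, $e^{-2\pi\Lambda(N_\nu)\sigma_\nu} = 2^{-6}$, so this term is $\leq 2^{-6}\varepsilon_\nu$. The quadratic terms are bounded using the Banach algebra property of $U_{\Lambda,r_{\nu+1}}$ (which holds because $\Lambda$ is subadditive, by~\eqref{S}): $|F_\nu X_\nu|_{r_{\nu+1}} \leq |F_\nu|_{r_\nu}|X_\nu|_{r_\nu} \leq \varepsilon_\nu \cdot \Psi(N_\nu)\varepsilon_\nu \leq 2^{-8}\varepsilon_\nu$ by~\eqref{bout1}, and similarly for the other quadratic contributions, each carrying a factor $\Psi(N_\nu)\varepsilon_\nu \leq 2^{-8}$. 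The contribution from the normal-form change is also quadratically small and bounded by a fixed multiple of $\Psi(N_\nu)\varepsilon_\nu^2$ times the norm of $P, P^{-1}$, which are $\leq 4$ and $\leq 1$ respectively. Summing: provided the absolute constants are arranged so that the total is $\leq 2^{-2}\varepsilon_\nu = \varepsilon_{\nu+1}$, condition $(H_{\nu+1})$'s size estimate holds; one also checks $\varepsilon_{\nu+1} \leq \alpha_{\nu+1}/4$ using $\alpha_{\nu+1} \geq \alpha_\nu/2 \geq 2\varepsilon_\nu \geq 4\varepsilon_{\nu+1}$ from Lemma~\ref{constant}, and $\mathrm{tr}\langle F_{\nu+1}\rangle = 0$ follows since we have explicitly removed the trace/average component and conjugation by $SL(2)$-type matrices preserves tracelessness.

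For the invariance of the fibered rotation number, $\rho(A_{\nu+1}+F_{\nu+1}) = \rho$, I would invoke the key elementary property of the fibered rotation number: it is invariant under conjugacy by transformations homotopic to the identity (this is the ``elementary property'' advertised in the abstract, recalled in Appendix~\ref{app2}). Since $Y_\nu$ is homotopic to the identity — $I + X_\nu$ is, being a small perturbation of $I$, and $P \in SL(2,\mathbb R)$ which is connected — the conjugacy preserves $\rho$, giving the last line of $(H_{\nu+1})$. Finally, the estimate $|Y_\nu - I|_{r_\nu} \leq 8\Psi(N_\nu)\varepsilon_\nu$ follows from $Y_\nu - I = (I+X_\nu)P^{-1} - I = X_\nu P^{-1} + (P^{-1} - I)$, bounding $|X_\nu|_{r_\nu} \leq \Psi(N_\nu)\varepsilon_\nu$, $|P^{-1}| \leq 1$, and $|P^{-1} - I| \lesssim \Psi(N_\nu)\varepsilon_\nu$ (the normal-form transformation for $A_\nu + \hat F_\nu(0)$ is close to the identity at the same rate as $\hat F_\nu(0)$ is small relative to $\alpha_\nu$, the factor $\Psi(N_\nu)$ being harmless since $\Psi \geq 1$), with the absolute constant $8$ absorbing these. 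The main obstacle I expect is the bookkeeping of constants: one must carefully verify that all the quadratic error contributions, after conjugation by $I+X_\nu$ and by the normal-form matrix $P$ (whose norm is only bounded by $4$, not by $1+o(1)$), still sum to something bounded by $\varepsilon_\nu/4$ rather than merely $O(\varepsilon_\nu)$ — this is precisely why the thresholds~\eqref{bout1} and~\eqref{bout2} were chosen with the specific powers $2^8$ and $2^6$, and getting the inequalities to close requires tracking these factors through the Neumann series for $(I+X_\nu)^{-1}$ and the conjugation identity with some care.
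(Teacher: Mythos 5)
Your proposal follows the paper's scheme exactly in outline: conjugate by $Z_\nu=I+X_\nu$ using the solution of the truncated cohomological equation from Lemma~\ref{lemma:linearizedEquation}, split off the new average $\hat F_\nu(0)$, renormalize $A_\nu+\hat F_\nu(0)$ to $\alpha_{\nu+1}J$ by the constant matrix $P_\nu$ of Lemma~\ref{constant}, and invoke homotopy-invariance of the fibered rotation number. The numerology, the appeal to~\eqref{bout1} and~\eqref{bout2}, and the truncation estimate are also the paper's. But two steps are not actually proved. The trace step is the more serious: you assert $\mathrm{tr}\langle F_{\nu+1}\rangle=0$ ``follows since we have explicitly removed the trace/average component and conjugation by $SL(2)$-type matrices preserves tracelessness,'' but the conjugated error
\[
R_\nu=-(I+X_\nu)^{-1}\partial_\omega(I+X_\nu)+(I+X_\nu)^{-1}(A_\nu+F_\nu)(I+X_\nu)-B_\nu
\]
contains the term $-(I+X_\nu)^{-1}\partial_\omega(I+X_\nu)$, whose pointwise trace is not zero, so conjugation-invariance of the trace does not settle it. One genuinely must show $\mathrm{tr}\langle(I+X_\nu)^{-1}\partial_\omega(I+X_\nu)\rangle=0$; the paper does this via a Neumann-series expansion, each term being a torus-average of a derivative $\partial_\omega(X_\nu^{k+1})$ and hence zero (equivalently, $\mathrm{tr}(Y^{-1}\partial_\omega Y)=\partial_\omega\ln\det Y$ averages to zero). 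This is short but necessary and is missing from your sketch.

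Second, the bound on $Y_\nu-I$ does not close. Your choice $Y_\nu=Z_\nu P^{-1}$ is in fact the algebraically correct one for the stated identity $\partial_\omega Y_\nu=(A_\nu+F_\nu)Y_\nu-Y_\nu(A_{\nu+1}+F_{\nu+1})$; the paper instead writes $Y_\nu=P_\nu Z_\nu P_\nu^{-1}$, which makes the estimate $|Y_\nu-I|=|P_\nu(Z_\nu-I)P_\nu^{-1}|\leq 4|X_\nu|_{r_\nu}$ immediate but actually conjugates $P_\nu(A_\nu+F_\nu)P_\nu^{-1}$ rather than $A_\nu+F_\nu$ (a bookkeeping slip there). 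With your choice, $Y_\nu-I=(Z_\nu-I)P^{-1}+(P^{-1}-I)$, and you would need $|P^{-1}-I|\lesssim\Psi(N_\nu)\varepsilon_\nu$. Lemma~\ref{constant} only gives $|P|\leq 4$ and $|P^{-1}|\leq 1$, with no control on $P-I$. Your parenthetical that the normal-form transformation is ``close to the identity at the same rate as $\hat F_\nu(0)$ is small relative to $\alpha_\nu$'' would require a quantitative refinement of Lemma~\ref{constant} that is not stated, and even granting it, $\varepsilon_\nu/\alpha_\nu$ is not a priori bounded by $\Psi(N_\nu)\varepsilon_\nu$ (recall $\alpha_\nu$ can decrease like $2^{-\nu}\alpha$). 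So this estimate, as written, is a genuine gap: you need either to prove a bound on $|P_\nu-I|$ in terms of $|\hat F_\nu(0)|/\alpha_\nu$ and then verify the comparison, or to organize the conjugacies so that only two-sided conjugation by $P_\nu$ appears in the norm.
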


\begin{proof} 
The transformation $Y_\nu$ will be the composition of a quasi-periodic linear transformation given by Lemma~\ref{lemma:linearizedEquation} and a constant transformation given by Lemma~\ref{constant} to put back the constant elliptic part into normal form.

Let $X_\nu$ be given by Lemma \ref{lemma:linearizedEquation}, and let $Z_\nu=I+X_\nu$. Since $X_\nu$ solves
\begin{equation*}
\partial_\omega X_\nu  = [A_\nu,X_\nu] + G_\nu,
\end{equation*}
a computation leads to
\[
\partial_\omega Z_\nu  = (A_\nu+F_\nu)Z_\nu -Z_\nu(B_{\nu} +R_{\nu}),
\]
with
\[
B_\nu=A_\nu+\hat{F}_\nu(0), \quad R_{\nu} = (I+X_\nu)^{-1}[(F_\nu-\hat{F}_\nu(0)-G_\nu) + F_\nu X_\nu  - X_\nu\hat{F}_\nu(0).
\]
We can estimate
\begin{eqnarray*}
|R_{\nu}|_{r_\nu-\sigma_\nu} &=& |(I+X_\nu)^{-1}[(F_\nu-\hat{F}_\nu(0)-G_\nu) + F_\nu X_\nu  - X_\nu\hat{F}_\nu(0)|_{r_\nu-\sigma_\nu}\\
&\leq&  2|F_\nu-\hat{F}_\nu(0)-G_\nu|_{r_\nu-\sigma_\nu} + 2|F_\nu X_\nu|_{r_\nu-\sigma_\nu}  + 2|X_\nu\hat{F}_\nu(0)|_{r_\nu-\sigma_\nu}\\
&\leq&  |F_\nu|_{r_\nu}2e^{-2\pi\Lambda(N_\nu)\sigma_\nu} +4  |X_\nu|_r |F_\nu|_r\\
&\leq&  2e^{-2\pi\Lambda(N_\nu)\sigma_\nu}\varepsilon_\nu +8\Psi(N_\nu)\varepsilon_\nu \varepsilon_\nu\\
&\leq & 2^{-5}\varepsilon_\nu+2^{-5}\varepsilon_\nu
\end{eqnarray*}
where we used~\eqref{bout1} and~\eqref{bout2} in the last inequality, and therefore
\begin{equation}\label{est1}
|R_{\nu}|_{r_\nu-\sigma_\nu}  \leq 2^{-4}\varepsilon_\nu. 
\end{equation}
Let us now check that $\textrm{tr}\langle R_\nu\rangle=0$. By the equality
\[
\partial_\omega (I+X_\nu) = (A_\nu+F_\nu)(I+X_\nu)-(I+X_\nu)(B_{\nu}+R_{\nu}),
\]
we know that
\[
R_{\nu}= -(I+X_\nu)^{-1}\partial_\omega (I+X_\nu) + (I+X_\nu)^{-1}(A_\nu+F_\nu)(I+X_\nu)-B_{\nu}.
\]
It follows from the assumptions $\textrm{tr} A_\nu = \textrm{tr} \hat{F}_\nu(0) = \textrm{tr} B_{\nu} = 0$ that
\[
  \textrm{tr}\langle (I+X_\nu)^{-1}(A_\nu+F_\nu)(I+X_\nu)-B_{\nu}\rangle =0.
\]
On the other hand, using $\textrm{tr}(AB)=\textrm{tr}(BA)$ we have
\begin{eqnarray*}
 && \mathrm{tr} \langle  (I+X_\nu)^{-1}\partial_\omega (I+X_\nu) \rangle \\
 &=& \mathrm{tr}  \langle \partial_\omega (I+X_\nu)  (I+X_\nu)^{-1} \rangle\\
 &=& \mathrm{tr} \langle  (\partial_\omega X_\nu)  (I+X_\nu)^{-1} \rangle \\
 &=&  \mathrm{tr}\langle  \partial_\omega X_\nu - \frac{1}{2}((\partial_\omega X_\nu) X_\nu + X_\nu \partial_\omega X_\nu \rangle) +
 \frac{1}{3}((\partial_\omega X_\nu)X_\nu^2 + X_\nu (\partial_\omega X_\nu) X_\nu + (\partial_\omega X_\nu) X_\nu^2) +\cdots
 \\
 &=& \textrm{tr}\langle\sum_{k=0}\frac{(-1)^{k}}{k+1}\langle \partial_\omega(X_\nu^{k+1})\rangle \\
 &=&0.
\end{eqnarray*}
Thus $\textrm {tr}\langle R_{\nu}\rangle = 0$. Now we want to apply Lemma~\ref{constant}. Observe that $\varepsilon_\nu \leq \alpha_\nu/4$, therefore Lemma~\ref{constant} gives that $\rho(B_\nu)=\rho(A_\nu+\hat{F}_\nu(0))=\alpha_{\nu+1}$ satisfy
\[ \alpha_\nu/2 \leq \alpha_{\nu+1} \leq \alpha_\nu +\alpha_\nu/2  \]
hence there exists $P_\nu\in SL(2,\mathbb{R})$ such that $P_\nu(B_\nu)P_\nu^{-1}=\alpha_{\nu+1} J$ with
\begin{equation}\label{est2}
|P_\nu| \leq 4 \quad |P_\nu^{-1}| \leq 1.
\end{equation}
We can finally define
\[ Y_\nu=P_\nu Z_\nu P_\nu^{-1}, \quad A_{\nu+1}=\alpha_{\nu+1} J, \quad F_{\nu+1}=P_\nu R_\nu P_\nu^{-1} \]
so that
\[
\partial_\omega Y_\nu  = (A_\nu+F_\nu)Y_\nu -Y_\nu(A_{\nu+1} +F_{\nu+1}).
\]
Let us check that $(H)_{\nu+1}$ is satisfied. By definition $A_{\nu+1}=\alpha_{\nu+1} J$ and we know that
\[  \varepsilon_{\nu+1}=\varepsilon_\nu/4 < \alpha_\nu/8 \leq \alpha_{\nu+1}/4.  \] 
Again by definition,  $F_{\nu+1}=P_\nu R_\nu P_\nu^{-1}$ and from the estimates~\eqref{est1} and~\eqref{est2} we have
\[ |F_{\nu+1}|_{r_\nu-\sigma_\nu} \leq 2^{-4}4\varepsilon_\nu=\varepsilon_{\nu+1} \]
whereas $\textrm {tr}\langle F_{\nu+1}\rangle =\textrm {tr}\langle R_{\nu}\rangle = 0$. Then, $Z_\nu=I+X_\nu$ is obviously homotopic to the identity ans so is $P_\nu\in SL(2,\mathbb{R})$ hence
\[ \rho(A_{\nu+1} + F_{\nu+1})=\rho(A_\nu + F_\nu)=\rho.\]
To conclude, from~\eqref{est2} and the estimates on $X_\nu$ given by Lemma~\ref{lemma:linearizedEquation} we get
\[ |Y_\nu-I|_{r_\nu} \leq 4|Z_\nu-I|_{r_\nu} \leq 4|X_\nu|_{r_\nu} \leq 8\Psi(N_\nu)\varepsilon_\nu.  \]
This concludes the proof.
\end{proof}

\section{Proof of Theorem~\ref{th: main theorem}}\label{Proof}

In this section we finally prove Theorem~\ref{th: main theorem}. Letting $A_0=A$ and $F_0=F$, we can apply inductively Proposition~\ref{lemma:fullEquation} and for any $\nu \in \mathbb{N}$, if we define
\[ Y^\nu=\prod_{\mu\leq\nu}Y_\mu \in U_{\Lambda,r_\nu}(\mathbb{T}^d,GL(2,\mathbb{R})) \]
we have
\[
\partial_\omega Y^\nu  = (A+F)Y^\nu -Y^\nu(A_{\nu+1} +F_{\nu+1}),
\]
with
\[ \rho(A_{\nu+1} +F_{\nu+1})=\rho, \quad |F|_{r_{\nu+1}}\leq \varepsilon_{\nu+1}. \]
As the sequence $\varepsilon_\nu$ converges to zero, the only thing that remains to be proved is that $r_{\nu}$ converges to a non-zero limit and that $Y^\nu$ converges. To do so, let us first observe that
\[ \sum_{\nu \geq 1}\frac{1}{\Lambda(N_\nu)}=\sum_{\nu \geq 1}\frac{1}{\Lambda(\Psi^{-1}(2^\nu\Psi(N_0)))} \leq \int_{0}^{+\infty}\frac{dx}{\Lambda(\Psi^{-1}(2^x\Psi(N_0)))}  \]
and changing variables $v=\Psi^{-1}(2^x\Psi(N_0))$, this gives
\[ \sum_{\nu \geq 1}\frac{1}{\Lambda(N_\nu)} \leq \frac{1}{\ln 2}\int_{N_0}^{\infty} \frac{\Psi'(v)dv}{\Lambda(v)\Psi(v)} \]
and finally, by an integration by parts this yields
\[ \sum_{\nu \geq 0}\frac{1}{\Lambda(N_\nu)} \leq \frac{1}{\ln 2}\int_{N_0}^{\infty} \frac{\Lambda'(v)\ln \Psi(v)dv}{\Lambda^2(v)} \leq \frac{\pi}{6\ln 2}r \]
where the last inequality follows from~\eqref{N0}. By definition of $\sigma_\mu$ this gives
\[ \sum_{\nu \geq 0}\sigma_\nu =\frac{3 \ln 2}{\pi} \sum_{\nu \geq 0}\frac{1}{\Lambda(N_\nu)} \leq r/2\]
and thus $r_\nu \geq r/2$ converges to some $r^* \geq r/2$. To conclude, $Y^\nu-I$ is easily seen to form a Cauchy sequence on the space $U_{\Lambda,r/2}(\mathbb{T}^d,GL(2,\mathbb{R}))$ and thus $Y^\nu$ converges to a limit $Y$ which satisfies the bound
\[ |Y-I|_{r/2} \leq 2\sum_{\nu \geq 0}|Y_\nu-I| \leq 16\sum_{\nu \geq 0}\Psi(N_\nu)\varepsilon_\nu \leq 32\Psi(N_0)\varepsilon.  \]

\section{Proof of Theorem~\ref{th2}}\label{app3}

Let us start with the following lemma, which says that if $\omega$ does not satisfy the $\Lambda$-R\"{u}ssmann condition~\eqref{Rus}, then one cannot solve the cohomological equation in general.

\begin{lemma}\label{coho}
Assume $\omega$ does not satisfy~\eqref{Rus}, that is
\[ \limsup_{v\to\infty}\frac{\log\Psi(v)}{\Lambda(v)}>0. \]
Then there exist $r>0$ such that for any $\varepsilon\geq 0$ and any $\rho \in \mathbb{R}$, there exists a function $u : \mathbb{T}^d \rightarrow \mathbb{R}$ for which 
\[ \int_{\mathbb{T}^d}u(\theta)d\theta =\rho, \quad |u-\rho|_r \leq \varepsilon  \]
but such that the equation
\begin{equation}\label{cohomo}
\omega\cdot \partial_\theta v(\theta)=u(\theta)-\rho \tag{$E$}
\end{equation}
has no continuous solution $v : \mathbb{T}^d \rightarrow \mathbb{R}$.
\end{lemma}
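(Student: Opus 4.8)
The plan is to realise the obstruction directly on the Fourier side. If $v:\mathbb{T}^d\to\mathbb{R}$ were a continuous solution of~\eqref{cohomo}, then testing the equation (read in the distributional sense, the weakest one compatible with $v$ merely continuous) against $e^{-2\pi i k\cdot\theta}$ would force $(2\pi i\,k\cdot\omega)\,\hat v(k)=\hat u(k)$ for every $k\in\mathbb{Z}^d$; hence along any sequence of near-resonances $k_n$ of $\omega$ the coefficient $\hat v(k_n)$ equals $\hat u(k_n)$ divided by the small quantity $2\pi i\,k_n\cdot\omega$, and is therefore amplified by its reciprocal. The failure of~\eqref{Rus} is precisely what produces such $k_n$, with $|2\pi k_n\cdot\omega|\le e^{-c\Lambda(|k_n|)}$ for some fixed $c>0$; since the resulting gain $e^{c\Lambda(|k_n|)}$ beats the ultra-differentiable weight $e^{2\pi\Lambda(|k_n|)r}$ as soon as $r<c/(2\pi)$, one can place the Fourier mass of $u-\rho$ on the modes $\pm k_n$ in an amount that is arbitrarily small in $|\cdot|_r$ yet keeps $|\hat v(k_n)|$ bounded away from $0$; the Riemann--Lebesgue lemma then rules out any continuous $v$. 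Rescaling the amplitude of $u-\rho$ shrinks $|u-\rho|_r$ without affecting the lower bound on $|\hat v(k_n)|$, which is why a single $r$ (depending only on $\omega$ and $\Lambda$) works for every $\varepsilon$.

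First I would extract the resonant modes. Writing $2c:=\limsup_{v\to\infty}\log\Psi(v)/\Lambda(v)>0$, choose reals $v_n\to\infty$ with $\log\Psi(v_n)>c\Lambda(v_n)$, set $K_n:=\lceil v_n\rceil$, and use that $\Psi$ is increasing together with the subadditivity~\eqref{S} (which gives $\Lambda(K_n)\le\Lambda(v_n)+\Lambda(1)\le 2\Lambda(v_n)$) to get integers $K_n\to\infty$ with $\Psi(K_n)\ge e^{(c/2)\Lambda(K_n)}$; relabel $c/2$ as $c$. Since $\Lambda(v)\to\infty$, this forces $\Psi(K_n)\to\infty$, so the integer vectors $k_n$ with $0<|k_n|\le K_n$ realising the maximum defining $\Psi(K_n)$ in~\eqref{eq:BR frequency omega} must satisfy $|k_n|\to\infty$; after passing to a subsequence $|k_n|$ is strictly increasing, and, since $\Lambda$ is increasing and $|k_n|\le K_n$,
\[|2\pi k_n\cdot\omega|=\Psi(K_n)^{-1}\le e^{-c\Lambda(K_n)}\le e^{-c\Lambda(|k_n|)}.\]
Passing to a further subsequence I may also assume $\Lambda(|k_n|)\ge n$, so that $S:=\sum_{n\ge1}e^{-(c/2)\Lambda(|k_n|)}<\infty$.

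Next, fix $r:=c/(4\pi)$ (any $r\in(0,c/(2\pi))$ would do), and given $\varepsilon>0$ set
\[u(\theta):=\rho+\frac{\varepsilon}{S}\sum_{n\ge1}e^{-c\Lambda(|k_n|)}\cos(2\pi k_n\cdot\theta),\]
a real-valued function with $\int_{\mathbb{T}^d}u\,d\theta=\rho$, whose only nonzero-frequency Fourier coefficients are $\hat u(\pm k_n)=\frac{\varepsilon}{2S}e^{-c\Lambda(|k_n|)}$; since $2\pi r-c=-c/2$,
\[|u-\rho|_r=\frac{\varepsilon}{S}\sum_{n\ge1}e^{-c\Lambda(|k_n|)}e^{2\pi\Lambda(|k_n|)r}=\frac{\varepsilon}{S}\sum_{n\ge1}e^{-(c/2)\Lambda(|k_n|)}=\varepsilon,\]
so $u\in U_{\Lambda,r}(\mathbb{T}^d,\mathbb{R})$ and $|u-\rho|_r\le\varepsilon$. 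If some continuous $v:\mathbb{T}^d\to\mathbb{R}$ solved~\eqref{cohomo}, testing against $e^{-2\pi i k_n\cdot\theta}$ would give $(2\pi i\,k_n\cdot\omega)\hat v(k_n)=\hat u(k_n)$, hence, using $k_n\cdot\omega\ne0$ (non-resonance of $\omega$) and the estimate of the previous step,
\[|\hat v(k_n)|=\frac{|\hat u(k_n)|}{|2\pi k_n\cdot\omega|}\ge\frac{\varepsilon}{2S}\,e^{-c\Lambda(|k_n|)}\,e^{c\Lambda(|k_n|)}=\frac{\varepsilon}{2S}>0\qquad\text{for all }n,\]
contradicting the Riemann--Lebesgue lemma, since $|k_n|\to\infty$ while $v$, being continuous, lies in $L^1(\mathbb{T}^d)$. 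Hence~\eqref{cohomo} has no continuous solution.

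The only step requiring genuine care is the extraction: the inequalities must be chained in the right direction (the small-divisor gain is read off at $K_n$, whereas the ultra-differentiable weight only sees $\Lambda(|k_n|)\le\Lambda(K_n)$), and the passage from the continuous parameter $v$ to the integer $K$ at which the maximum in~\eqref{eq:BR frequency omega} is taken uses the subadditivity~\eqref{S}. The construction of $u$ and the closing Fourier computation are then entirely routine.
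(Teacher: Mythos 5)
Your proof is correct and follows the same route as the paper's: extract a sequence of near-resonances $k_n$ from the failure of~\eqref{Rus}, concentrate the Fourier mass of $u-\rho$ there, and observe that the forced coefficients $\hat v(k_n)$ of any solution of~\eqref{cohomo} are bounded away from zero, violating Riemann--Lebesgue. You are in fact a bit more careful than the paper on three minor points that it glosses over: you build $u$ via cosines so that it is manifestly real-valued (the paper prescribes only $\hat u(k_j)$ and so formally produces a complex $u$), you pass through $K_n=\lceil v_n\rceil$ to reconcile the real argument of the limsup with the integer argument of $\Psi$ in~\eqref{eq:BR frequency omega}, and you extract a subsequence with $\Lambda(|k_n|)\ge n$ to justify that the normalizing sum converges (the paper asserts $C<\infty$ without justification). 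One cosmetic remark: the statement says $\varepsilon\ge 0$, but the conclusion is vacuous for $\varepsilon=0$ (then $u\equiv\rho$ and $v\equiv 0$ solves~\eqref{cohomo}); like the paper's proof, yours implicitly requires $\varepsilon>0$, so this is a typo in the lemma rather than a gap in either argument.
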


\begin{proof}
Let $r>0$ be such that 
\[ \limsup_{v\to\infty}\frac{\log\Psi(v)}{\Lambda(v)}\geq 3\pi r \]
so that there exists a positive sequence $v_j \rightarrow +\infty$ for which
\[ \Psi(v_j)^{-1} \leq e^{-3\pi r\Lambda(v_j)}\]
By definition of $\Psi$, there exists infinitely many $k_j \in \mathbb{Z}^d \setminus \{0\}$ (for which $|k_j|=v_j$) and
\[ |2\pi k_j \cdot \omega| \leq e^{-3\pi r\Lambda(|k_j|)}. \]
Let us define a constant 
\[ C=\sum_{j \in \mathbb{N}} e^{-\pi r\Lambda(|k_j|)} <+\infty \]
and a function 
\[ u(\theta)=\sum_{k\in\mathbb{Z}^d} \hat{u}(k) e^{2\pi i k\cdot \theta},\]
by setting
\[ \hat{u}(0)=\rho, \quad \hat{u}(k_j)=\varepsilon C^{-1}2\pi k_j \cdot \omega \]
and all other Fourier coefficients equal to zero. By construction we do have
\[ \int_{\mathbb{T}^d}u(\theta)d\theta =\rho, \quad |u-\rho|_r \leq \varepsilon. \]
Now a function $v : \mathbb{T}^d \rightarrow \mathbb{R}$
\[ v(\theta)=\sum_{k\in\mathbb{Z}^d} \hat{v}(k) e^{2\pi i k\cdot \theta}\]
solves~\eqref{cohomo} if and only if
\[ \hat{v}(k_j)=\frac{\hat{u}(k_j)}{2\pi ik_j\cdot \omega}=\varepsilon C^{-1}. \]
Clearly such Fourier coefficients do not define a function which is integrable, and therefore $v$ cannot be continuous.
\end{proof}

To conclude the proof of Theorem~\ref{th2}, let $u : \mathbb{T}^d \rightarrow \mathbb{R}$ be the function given by Lemma~\ref{coho}, and consider the $sl(2,\mathbb{R})$ cocycle $(\omega,A+F)$ defined by
\[ A=\rho J, \quad F(\theta)=u(\theta)J-\rho J. \]
Its fibered rotation number is equal to $\rho$ (see Appendix~\ref{app2}) and $|F|\leq \varepsilon$. Argue by contradiction that $(\omega,A+F)$ is reducible. Since it takes values in $so(2,\mathbb{R})$, it follows from~\cite{chavaudret2011reducibility} that it is reducible by a transformation that takes values in $SO(2,\mathbb{R})$, therefore there exists $v : \mathbb{T}^d \rightarrow \mathbb{R}$ and
\[  Y(\theta)=\begin{pmatrix}
\cos v(\theta) & \sin v(\theta) \\
-\sin v(\theta) & \cos v(\theta)
\end{pmatrix} \in SO(2,\mathbb{R}) \]
such that 
\[\partial_\omega Y=MY-YB\] 
for some constant matrix $B=\beta J$. But then necessarily $\beta=\rho$ (that is $B=A$) and the above equation is equivalent to~\eqref{cohomo} which has no continuous solution, which is a contradiction.

\section{Appendix}

\subsection{Product estimates}\label{app1}

\begin{lemma} \label{lem: appendix banach algebra}
Suppose $\Lambda$ satisfy the subadditivity condition~\eqref{S}. For any $f,g\in U_{\Lambda,r}(\mathbb{T}^d,\mathbb{R})$, we have $fg\in U_{\Lambda,r}(\mathbb{T}^d,\mathbb{R})$  and
\[
|f g|_r\leq |f|_r|g|_r.
\]
\end{lemma}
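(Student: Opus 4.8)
The claim is that $U_{\Lambda,r}(\mathbb{T}^d,\mathbb{R})$ is closed under multiplication with $|fg|_r \le |f|_r|g|_r$. The natural approach is to write the product in Fourier series and estimate the weighted norm directly, using subadditivity of $\Lambda$ to control the weight of a product coefficient by the sum of the weights. First I would recall that for $f(\theta) = \sum_k \hat f(k)e^{2\pi i k\cdot\theta}$ and $g(\theta) = \sum_\ell \hat g(\ell)e^{2\pi i \ell\cdot\theta}$, the Fourier coefficients of the product are given by the convolution $\widehat{fg}(k) = \sum_{\ell\in\mathbb{Z}^d}\hat f(\ell)\hat g(k-\ell)$.

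The key step is the weight estimate. For fixed $k$ and any splitting $k = \ell + (k-\ell)$, I would use $|k| \le |\ell| + |k-\ell|$ (triangle inequality for the $\ell^1$ norm on $\mathbb{Z}^d$) together with monotonicity of $\Lambda$ to get $\Lambda(|k|) \le \Lambda(|\ell| + |k-\ell|)$, and then subadditivity~\eqref{S} to conclude $\Lambda(|k|) \le \Lambda(|\ell|) + \Lambda(|k-\ell|)$; hence $e^{2\pi\Lambda(|k|)r} \le e^{2\pi\Lambda(|\ell|)r}e^{2\pi\Lambda(|k-\ell|)r}$. (One minor point to address: $\Lambda$ is only defined on $[1,+\infty)$, so for coefficients with $|k|=0$ or where $|\ell|$ or $|k-\ell|$ vanishes one should either extend $\Lambda$ by $\Lambda(0):=\Lambda(1)$ or handle those terms separately; since $\Lambda(0+y)\le\Lambda(1+y)\le\Lambda(1)+\Lambda(y)$ this causes no trouble, and I would simply remark that subadditivity extends to $x,y\ge 0$ after such an extension.)

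With this in hand the computation is routine: estimate
\[
|fg|_r = \sum_k |\widehat{fg}(k)|\,e^{2\pi\Lambda(|k|)r} \le \sum_k \sum_\ell |\hat f(\ell)||\hat g(k-\ell)|\,e^{2\pi\Lambda(|\ell|)r}e^{2\pi\Lambda(|k-\ell|)r},
\]
then interchange the order of summation (all terms nonnegative, so Tonelli applies) and substitute $m = k-\ell$ to factor the double sum as $\bigl(\sum_\ell |\hat f(\ell)|e^{2\pi\Lambda(|\ell|)r}\bigr)\bigl(\sum_m |\hat g(m)|e^{2\pi\Lambda(|m|)r}\bigr) = |f|_r|g|_r$. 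This shows both that the series for $fg$ has finite weighted norm (so $fg \in U_{\Lambda,r}$) and gives the asserted bound.

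I do not expect a genuine obstacle here — this is the standard argument that a weighted $\ell^1$ Fourier space with a subadditive weight is a Banach algebra. The only thing requiring a little care is the extension of~\eqref{S} to arguments that may equal $0$ or $1$, which is why I flagged it above; everything else is bookkeeping with nonnegative series.
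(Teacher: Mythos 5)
Your argument is correct and is essentially the same as the paper's: expand in Fourier series, use the triangle inequality $|m+n|\le|m|+|n|$, monotonicity, and subadditivity of $\Lambda$ to bound the weight on a product coefficient, then factor the resulting double sum. Your remark about extending $\Lambda$ to $[0,+\infty)$ is a fair point that the paper passes over silently, but it does not change the substance of the proof.
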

\begin{proof}

Expanding in Fourier series we have
\begin{eqnarray*}
f(\theta)=\sum_{k\in\mathbb{Z}^d} \hat{f}(k) e^{2\pi i k\cdot \theta}, \quad  g(\theta) = \sum_{k\in\mathbb{Z}^d} \hat{g}(k) e^{2\pi i k\cdot \theta}
\end{eqnarray*}
and
\begin{eqnarray*}
f(\theta) g(\theta) &=& \Big(\sum_{m\in\mathbb{Z}^d} \hat{f}(m) e^{2\pi im \cdot \theta} \Big) \Big(\sum_{n\in\mathbb{Z}^d} \hat{g}(n) e^{2\pi i n\cdot \theta}\Big)\\
&=&  \sum_{k\in\mathbb{Z}^d} \Big(\sum_{m+n=k}\hat{f}(m) \hat{g}(n)\Big) e^{2\pi ik\cdot \theta}.
\end{eqnarray*}
On the one had
\begin{eqnarray*}
|f g|_r &=&  |\sum_{k\in\mathbb{Z}^d} \Big(\sum_{m+n=k}\hat{f}(m) \hat{g}(n)\Big) e^{2\pi ik\cdot \theta}|_r\\
&=&\sum_{k\in\mathbb{Z}^d} \Big|\sum_{m+n=k}\hat{f}(m) \hat{g}(n)\Big| e^{2\pi  \Lambda(|k|)r}\\
&\leq&\sum_{k\in\mathbb{Z}^d} \Big(\sum_{m+n=k}|\hat{f}(m) \hat{g}(n)| e^{2\pi  \Lambda(|k|)r}\Big)
\end{eqnarray*}
and on the other hand
\begin{eqnarray*}
|f|_r|g|_r &=& |\sum_{m\in\mathbb{Z}^d} \hat{f}(m) e^{2\pi i m\cdot \theta}|_r |\sum_{n\in\mathbb{Z}^d} \hat{g}(n) e^{2\pi i n\cdot \theta}|_r\\
&=& \Big(\sum_{m\in\mathbb{Z}^d} |\hat{f}(m)| e^{2\pi \Lambda(|m|)r} \Big) \Big(\sum_{m\in\mathbb{Z}^d} |\hat{g}(n)| e^{2\pi \Lambda(|n|)r} \Big)\\
&= & \sum_{k\in\mathbb{Z}^d} \sum_{m+n=k}|\hat{f}(m) \hat{g}(n)| e^{2\pi r(\Lambda(|m|)+\Lambda(|n|))} .
\end{eqnarray*}
Since $|k| =|m+n|\leq|m|+|n|$ we have $\Lambda(|k|)\leq \Lambda(|m|)+\Lambda(|n|)$ by subadditivity and thus
$|f\cdot g|_r\leq |f|_r|g|_r$.
\end{proof}

\subsection{Fibered rotation number}\label{app2}

Let us consider a quasi-periodic cocycle
\[  x'(t) =M(t\omega)x(t),\]
where
\[ M : \mathbb{T}^d \rightarrow sl(2,\mathbb{R}), \quad M(\theta)=\begin{pmatrix}
a(\theta) & b(\theta) \\
c(\theta) & -a(\theta).
\end{pmatrix}  \] 
with $\omega \in \mathbb{R}^d$ non-resonant. Following Eliasson~\cite{eliasson1992floquet}, we define the fibered rotation number $\rho(M) \in \mathbb{R}$ as
\[ \rho(M)=\lim_{t \rightarrow + \infty} \frac{\phi_M((t)}{t}\]
where $\phi_M(t)=\phi(t)$ is any solution of the equation
\[ \phi'(t)=2a(t\omega)\cos\phi(t)\sin\phi(t)-(b(t\omega)+c(t\omega))\cos^2\phi(t)+b(t\omega). \]
A first special case is
\[ M(\theta)=u(\theta)J, \quad u : \mathbb{T}^d \rightarrow \mathbb{R} \]
we then have
\[  \rho(M)=\lim_{t \rightarrow + \infty} \frac{\phi_M((t)}{t}=\lim_{t \rightarrow + \infty} \frac{1}{t}\int_0^t\phi_M'(s)ds=\lim_{t \rightarrow + \infty} \frac{1}{t}\int_0^tu(s\omega)ds=\int_{\mathbb{T}^d} u(\theta)d\theta\]
where the last equality follows from the unique ergodicity of the translation flow by $\omega$. Another special case is
\[ M(\theta)=\alpha J+F(\theta), \quad \sup_{\theta \in \mathbb{T}^d}|F(\theta)|=\sup_{\theta \in \mathbb{T}^d}\left|\begin{pmatrix}
f_1(\theta) & f_2(\theta) \\
f_3(\theta) & -f_1(\theta)
\end{pmatrix}\right| \leq \varepsilon \]
from which one immediately obtains the estimate
\begin{eqnarray*}
|\rho(\alpha J)-\rho(M)| & = & |\alpha-\rho(M)| \leq  \left|\frac{1}{t}\int_0^t \phi_F'(s) ds\right| \leq 4\varepsilon
\end{eqnarray*}
which is nothing but Lemma~\ref{lemma:rhoRegualirity}.

\bibliographystyle{elsarticle-num-names}

\end{document}